    \newtheorem{thm}{Theorem}[section]
    \newtheorem{lem}[thm]   {Lemma}
    \newtheorem{cor}[thm]   {Corollary}
    \newtheorem{defn}[thm]  {Definition}
    \newtheorem{prop}[thm]  {Proposition}
\theoremstyle{remark}
    \newtheorem{rem}[thm]   {Remark}
\newcommand{\secref}[1]{Section~\ref{#1}}
\newcommand{\thmref}[1]{Theorem~\ref{#1}}
\newcommand{\propref}[1]{Proposition~\ref{#1}}
\newcommand{\lemref}[1]{Lemma~\ref{#1}}
\newcommand{\corref}[1]{Corollary~\ref{#1}}
    \newcommand{\F}        {\mathcal{F}}
    \newcommand{\ZZ}    {\mathbb{Z}}
    \newcommand{\RR}    {\mathbb{R}}
    \newcommand{\CC}    {\mathbb{C}}
    \newcommand{\QQ}    {\mathbb{Q}}
\begin{document}
	


\title{On the Structure of Co-K\"ahler Manifolds}

\author[G. Bazzoni]{Giovanni Bazzoni}
\address{Instituto de Ciencias Matem\'aticas CSIC-UAM-UC3M-UCM \\
Calle Nicol\'as Cabrera 13-15, Campus Cantoblanco\\
28049 Madrid, Spain}
\email{gbazzoni@icmat.es}

\author[J. Oprea]{John Oprea}
\address{Department of Mathematics\\
Cleveland State University\\
Cleveland OH \\
44115  USA}
\email{j.oprea@csuohio.edu}

\begin{abstract}
By the work of Li, a compact co-K\"ahler manifold $M$ is a mapping torus
$K_\varphi$, where $K$ is a K\"ahler manifold and $\varphi$ is a
Hermitian isometry. We show here that there is always
a finite cyclic cover $\overline M$ of the form
$\overline M \cong K \times S^1$, where $\cong$ is equivariant
diffeomorphism with respect to an action of $S^1$ on $M$ and
the action of $S^1$ on $K \times S^1$ by translation
on the second factor. Furthermore, the covering transformations
act diagonally on $S^1$, $K$ and are translations on the $S^1$ factor.
In this way, we see that, up to a finite cover, all compact co-K\"ahler manifolds
arise as the product of a K\"ahler manifold and a circle.
\end{abstract}

\maketitle
\pagestyle{myheadings}

\markboth{{G. Bazzoni and J. Oprea}}
{{On the Structure of co-K\"ahler Manifolds}}


\vskip1pt{\small\textbf{MSC classification [2010]}: Primary 53C25;
Secondary 53B35, 53C55, 53D05.}\vskip0.5pt
\vskip1pt{\small\textbf{Key words and phrases}: co-K\"ahler manifolds, mapping tori.}\vskip10pt

\section{Recollections on Co-K\"ahler Manifolds}\label{sec:reco}
In \cite{Li}, H. Li recently gave a structure result for compact co-K\"ahler
manifolds stating that such a manifold is always a K\"ahler mapping torus
(see \secref{sec:maptor}). In this paper, using Li's characterization, we give 
another type of structure theorem for co-K\"ahler manifolds based on classical
results in \cite{CR,Op,Sad,Wel}. As such, much of this paper is devoted
to showing how the interplay between the known geometry and the known
topology of co-K\"ahler manifolds creates beautiful structure. Basic results on
co-K\"ahler manifolds themselves come from \cite{CDM} (see also \cite{FV})
\footnote{The authors of \cite{CDM}
use the term \emph{cosymplectic} for Li's co-K\"ahler because they view these
manifolds as odd-dimensional versions of symplectic manifolds --- even as far
as being a convenient setting for time-dependent mechanics \cite{DT}. Li's
characterization, however, makes clear the true underlying K\"ahler structure,
so we have chosen to follow his terminology.}.\\

Let $(M^{2n+1},J,\xi,\eta,g)$ be an \emph{almost contact metric manifold}
given by the conditions
\begin{equation}\label{eq:1}
J^2 = -I + \eta \otimes \xi,\quad \eta(\xi)=1, \quad g(JX,JY)=g(X,Y)-\eta(X)\eta(Y),
\end{equation}
for vector fields $X$ and $Y$, $I$ the identity transformation on $TM$ and $g$ a
Riemannian metric. Here, $\xi$ is a vector field as well, $\eta$
is a $1$-form and $J$ is a tensor of type $(1,1)$. A local $J$-basis 
$\{X_1,\ldots,X_n,JX_1,\ldots,JX_n,\xi\}$
may be found with $\eta(X_i)=0$ for $i=1,\ldots,n$. The \emph{fundamental
$2$-form} on $M$ is given by
$$\omega(X,Y) = g(JX,Y),$$
and if $\{\alpha_1,\ldots,\alpha_n,\beta_1,\ldots,\beta_n,\eta\}$ is
a local $1$-form basis dual to the local $J$-basis, then
$$\omega = \sum_{i=1}^n \alpha_i \wedge \beta_i.$$
Note that $\imath_\xi \omega = 0$.

\begin{defn}\label{def:cosymp}
The geometric structure $(M^{2n+1},J,\xi,\eta,g)$ is a {\bf co-K\"ahler} structure on $M$ if
$$[J,J]+2\, d\eta \otimes \xi=0 \ \, {\rm and}\ \, d\omega=0=d\eta$$
or, equivalently, $J$ is parallel with respect to the metric $g$.
\end{defn}

A crucial fact that we use in our result is that, on a co-K\"ahler manifold, the vector 
field $\xi$ is Killing and parallel and the 1-form $\eta$ is harmonic. This fact is well 
known, but we were not able to find a direct proof in the literature, so we give one here.

\begin{lem}\label{lem:parallel}
On a co-K\"ahler manifold, the vector field $\xi$ is Killing and parallel.
Furthermore, the 1-form $\eta$ is a harmonic form.
\end{lem}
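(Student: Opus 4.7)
The plan is to exploit the equivalent formulation of the co-K\"ahler condition, namely that $J$ is parallel ($\nabla J=0$ for the Levi-Civita connection $\nabla$ of $g$), and to reduce everything to the algebraic identities encoded in \eqref{eq:1}.

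First I would extract the standard consequences of the almost contact metric axioms. Applying $J$ to $J^2\xi=-\xi+\eta(\xi)\xi=0$ and using $J^2(J\xi)=-J\xi+\eta(J\xi)\xi$ forces $J\xi$ to be a multiple of $\xi$, and since $J^2\xi=0$, that multiple must vanish, giving $J\xi=0$. The third equation of \eqref{eq:1} with $Y=\xi$ then reads $0=g(JX,J\xi)=g(X,\xi)-\eta(X)$, so $\eta(X)=g(X,\xi)$ for all $X$; in particular $|\xi|^2=\eta(\xi)=1$. A similar manipulation $J^2(JX)=-JX+\eta(JX)\xi$ compared with $J(J^2X)=-JX$ yields $\eta\circ J=0$.

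Next, I would differentiate the relation $J\xi=0$ using $\nabla J=0$:
\[
0=\nabla_Y(J\xi)=(\nabla_Y J)\xi+J(\nabla_Y\xi)=J(\nabla_Y\xi),
\]
so $\nabla_Y\xi\in\ker J$. But if $JZ=0$, then $0=J^2Z=-Z+\eta(Z)\xi$ gives $Z=\eta(Z)\xi$, so $\ker J=\mathbb{R}\xi$. Writing $\nabla_Y\xi=f(Y)\xi$ and differentiating $g(\xi,\xi)=1$ gives $0=2g(\nabla_Y\xi,\xi)=2f(Y)$, so $\nabla\xi\equiv 0$. The Killing property is then immediate from the Koszul formula $(\mathcal{L}_\xi g)(X,Y)=g(\nabla_X\xi,Y)+g(X,\nabla_Y\xi)=0$.

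Finally, for the $1$-form $\eta$, I would use $\eta(X)=g(X,\xi)$ to get
\[
(\nabla_Y\eta)(X)=Y\bigl(g(X,\xi)\bigr)-g(\nabla_YX,\xi)=g(X,\nabla_Y\xi)=0,
\]
so $\eta$ is parallel. A parallel form is automatically harmonic, since with a local orthonormal frame $\{e_i\}$,
\[
d\eta=\sum_i e^i\wedge\nabla_{e_i}\eta=0,\qquad \delta\eta=-\sum_i\imath_{e_i}\nabla_{e_i}\eta=0.
\]
There is really no serious obstacle here; the only slightly delicate point is being careful not to invoke the usually-assumed identities $J\xi=0$, $\eta\circ J=0$, and $\eta(X)=g(X,\xi)$ before verifying they are genuine consequences of \eqref{eq:1}, as I do above. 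Once these are in hand, everything reduces to differentiating $J\xi=0$ and $|\xi|=1$.
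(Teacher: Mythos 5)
Your proof is correct, but it takes a genuinely different route from the paper's. You start from the \emph{equivalent} characterization of the co-K\"ahler condition as $\nabla J=0$ (which the paper's Definition~1.1 does record as an equivalent formulation), derive the algebraic identities $J\xi=0$, $\eta=g(\cdot,\xi)$, $\ker J=\mathbb{R}\xi$ from \eqref{eq:1}, and then obtain $\nabla\xi=0$ in two lines by differentiating $J\xi=0$ and $|\xi|^2=1$; the Killing property and the harmonicity of $\eta$ then both fall out of parallelism (via $\mathcal{L}_\xi g=g(\nabla_\cdot\xi,\cdot)+g(\cdot,\nabla_\cdot\xi)$ and the fact that a parallel form is closed and co-closed). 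The paper instead works directly from the normality condition and the closedness hypotheses $d\omega=d\eta=0$: it first proves $\mathcal{L}_\xi g=0$ by a substitution computation using $\mathcal{L}_\xi J=0$, Cartan's formula and $\imath_\xi\omega=0$, then deduces $\nabla\xi=0$ from the six-term Koszul formula, and finally gets harmonicity from the classical duality ``Killing $\Leftrightarrow$ dual $1$-form co-closed.'' So the logical order is reversed: you prove parallel first and get Killing for free, the paper proves Killing first and then parallel. Your argument is shorter and cleaner, but it leans on the nontrivial equivalence between the tensorial conditions and $\nabla J=0$ (the odd-dimensional analogue of the K\"ahler fact that $\nabla J=0$ iff $N_J=0$ and $d\omega=0$), whereas the paper's computation is self-contained modulo only the standard consequence $\mathcal{L}_\xi J=0$ of normality. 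Both are valid; if you keep your version, you should at least cite a reference for the equivalence you are invoking. A minor quibble: the identity $(\mathcal{L}_\xi g)(X,Y)=g(\nabla_X\xi,Y)+g(X,\nabla_Y\xi)$ is not usually called the Koszul formula, though it is of course correct.
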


\begin{proof}
The normality condition implies that $L_\xi J=0$ (see \cite{Bl}); in parti\-cular, 
$[\xi,JX]=J[\xi,X]$ for every vector field $X$ on $M$. Compatibility of the metric 
$g$ with $J$ is expressed by the right-hand relation in (\ref{eq:1}); with 
$\omega(X,Y)=g(JX,Y)$, it yields
\begin{equation}\label{eq:101}
g(X,Y)=\omega(X,JY)+\eta(X)\eta(Y).
\end{equation}
By definition,
\begin{equation}\label{eq:102}
(L_\xi g)(X,Y)=\xi g(X,Y)-g([\xi,X],Y)-g(X,[\xi,Y]).
\end{equation}
Substituting (\ref{eq:101}) in (\ref{eq:102}), we obtain
\begin{align*}
(L_\xi g)(X,Y)= & \ \xi \omega(X,Y)+\xi(\eta(X)\eta(Y))-\omega([\xi,X],JY)-\eta([\xi,X])\eta(Y)+\\
& - \ \omega(X,J[\xi,Y])-\eta(X)\eta([\xi,Y])=\\
= & \ \xi \omega(X,Y)-\omega([\xi,X],JY)-\omega(X,[\xi,JY])+(\xi\eta(X))\eta(Y)+\\
& + \eta(X)(\xi\eta(Y))-\eta([\xi,X])\eta(Y)-\eta(X)\eta([\xi,Y])=\\
= & \ (L_\xi\omega)(X,JY)+\eta(X)(\xi\eta(Y)-\eta([\xi,Y]))+\\
 & + \ \eta(Y)(\xi\eta(X)-\eta([\xi,X]))=\\
= & \ \eta(X)(d\eta(\xi,Y)+Y\eta(\xi))+\eta(Y)(d\eta(\xi,X)+X\eta(\xi))=\\
= & \ 0.
\end{align*}
The last equalities follow from these facts:
\begin{itemize}
\item since $\omega$ is closed and $\imath_\xi\omega=0$, $L_\xi\omega=0$ by Cartan's 
magic formula;
\item $d\eta=0$;
\item as $\eta(\xi)\equiv 1$, one has $X\eta(\xi)=Y\eta(\xi)=0$.
\end{itemize}
This proves that $\xi$ is a Killing vector field. In order to show that $\xi$ is parallel, 
we use the following formula for the covariant derivative $\nabla$ of the Levi-Civita 
connection of $g$; for vector fields $X,Y,Z$ on $M$, one has
\begin{align}\label{eq:103}
2g(\nabla_X Y,Z)=& \ Xg(Y,Z)+Yg(X,Z)-Zg(X,Y)+\\  \nonumber
&+ g([X,Y],Z)+g([Z,X],Y)-g([Y,Z],X).
\end{align}
Setting $Y=\xi$ in (\ref{eq:103}) and recalling that, on any almost contact metric manifold,
$g(X,\xi)=\eta(X)$, we obtain
\begin{align*}
2g(\nabla_X\xi,Z)= & \ Xg(\xi,Z)+\xi g(X,Z)-Zg(X,\xi)+g([X,\xi],Z)+\\
& + \ g([Z,X],\xi)-g([\xi,Z],X)=\\
=& \ \xi g(X,Z)-g([\xi,X],Z)-g([\xi,Z],X)+X\eta(Z)+\\
& -\ Z\eta(X)-\eta([X,Z])=\\
= & \ (L_\xi g)(X,Z)+d\eta(X,Z)=\\
= & \ 0.
\end{align*}
Since $X$ and $Z$ are arbitrary it follows that $\nabla\xi=0$.

To prove that $\eta$ is harmonic, we rely on the following result: a vector field 
on a Riemannian manifold $(M,g)$ is Killing if and only if the dual 1-form is co-closed. 
For a proof, see for instance \cite[page 107]{Go}. Applying this to $\xi$, we see that 
$\eta$ co-closed; since it is closed, it is harmonic.
\end{proof}

\noindent \lemref{lem:parallel} will be a key point in our structure theorem
below. In fact, in \cite{Li}, it is shown that we can replace $\eta$ by a
harmonic integral form $\eta_\theta$ with dual parallel vector field
$\xi_\theta$ and associated metric $g_\theta$, $(1,1)$-tensor
$J_\theta$ and closed $2$-form $\omega_\theta$ with $i_{\xi_\theta}\omega_\theta
=0$. Then we have the following (see \secref{sec:maptor} for definitions).

\begin{thm}[\cite{Li}] \label{thm:maptor}
With the structure $(M^{2n+1},J_\theta,\xi_\theta,\eta_\theta,g_\theta)$,
there is a compact K\"ahler manifold $(K,h)$ and a Hermitian isometry
$\psi\colon K \to K$ such that $M$ is diffeomorphic to the mapping torus
$$K_\psi = \frac{K \times [0,1]}{(x,0)\sim (\psi(x),1)}$$
with associated fibre bundle $K \to M=K_\psi \to S^1$.
\end{thm}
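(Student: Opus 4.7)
The plan is to convert the co-K\"ahler data into a submersion $f\colon M \to S^1$ and to use the flow of $\xi_\theta$ to exhibit $M$ as a mapping torus with K\"ahler fiber. First I would observe that, since $\eta_\theta$ is closed with integral periods, integrating $\eta_\theta$ along paths from a chosen basepoint defines a well-defined smooth map $f\colon M \to \RR/\ZZ = S^1$ with $df = \eta_\theta$. The relation $\eta_\theta(\xi_\theta) = 1$ makes $f$ a submersion, and compactness of $M$ together with Ehresmann's theorem upgrades $f$ to a smooth fiber bundle over $S^1$. Set $K := f^{-1}(0)$, so that $TK = \Ker \eta_\theta$, and let $\phi_t$ denote the flow of $\xi_\theta$. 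Integrating $df(\xi_\theta) = 1$ yields $f(\phi_t(x)) = f(x) + t \pmod 1$; hence $\psi := \phi_1|_K$ is a diffeomorphism of $K$, and $(x,t)\mapsto \phi_t(x)$ descends to a bundle diffeomorphism $K_\psi \xrightarrow{\cong} M$ covering the identity of $S^1$.

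Next I would equip $K$ with the restricted tensors $J := J_\theta|_{TK}$, $h := g_\theta|_{TK}$, and $\omega := \omega_\theta|_{TK}$. It is a standard consequence of the almost contact axioms that $J_\theta \xi_\theta = 0$ and $J_\theta$ preserves $\Ker \eta_\theta = TK$, so $J$ satisfies $J^2 = -I$ and $h$ is Hermitian with respect to it. The normality condition combined with $d\eta_\theta = 0$ forces the Nijenhuis tensor of $J$ to vanish, so $J$ is integrable, and closedness of $\omega$ is inherited from that of $\omega_\theta$. Hence $(K, J, h)$ is a compact K\"ahler manifold. That the monodromy $\psi$ is a Hermitian isometry then follows from \lemref{lem:parallel} ($\xi_\theta$ is Killing, so $\phi_1$ is an isometry) combined with the normality consequence $L_{\xi_\theta} J_\theta = 0$ (so $\phi_1$ commutes with $J_\theta$ and $\psi$ is holomorphic).

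The step I expect to be the main obstacle is the passage from $\nabla^M J_\theta = 0$ on the ambient manifold to the K\"ahler condition $\nabla^K J = 0$ on the fiber. The cleanest route uses the fact that $\xi_\theta$ is parallel (\lemref{lem:parallel}), so the orthogonal splitting $TM|_K = TK \oplus \RR \xi_\theta$ is preserved by $\nabla^M$ and $K$ is totally geodesic in $M$; the second fundamental form thus vanishes, the Levi-Civita connection of $h$ is the restriction of $\nabla^M$ to tangential directions along $K$, and parallelness of $J_\theta$ descends tautologically to parallelness of $J$. This is precisely where the upgrade to the integral structure $(J_\theta, \xi_\theta, \eta_\theta, g_\theta)$ from \cite{Li}, together with the parallelness proved in \lemref{lem:parallel}, becomes essential.
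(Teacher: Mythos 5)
The paper itself gives no proof of this statement: it is quoted from Li's paper \cite{Li}, with only the remark that Tischler's fibration theorem is the essential ingredient. Your proposal is therefore not competing with an argument in the text, but it is a correct reconstruction of the standard proof, and it follows exactly the route the paper gestures at: integrality of $[\eta_\theta]$ gives a circle-valued map $f$ with $df=\eta_\theta$ (Tischler), the relation $\eta_\theta(\xi_\theta)=1$ makes $f$ a submersion, and the flow $\phi_t$ of the parallel field $\xi_\theta$ both sweeps out the mapping-torus structure and supplies the monodromy as a Hermitian isometry (Killing plus $L_{\xi_\theta}J_\theta=0$). The descent of the K\"ahler condition to the fibre is also handled correctly; note, though, that your third paragraph is dispensable, since integrability of $J_\theta|_{TK}$ (from $[J_\theta,J_\theta]=0$ and the fact that $TK=\Ker\eta_\theta$ is involutive and $J_\theta$-invariant) together with closedness of $\omega_\theta|_K$ already characterizes K\"ahler without passing through $\nabla^K J=0$; the totally geodesic argument via $\nabla\xi_\theta=0$ is a valid alternative. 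The only slip is one of convention: with the gluing $(x,0)\sim(\psi(x),1)$ as in the statement, the map $(x,t)\mapsto\phi_t(x)$ identifies $(x,1)$ with $(\phi_1(x),0)$, so it descends from the mapping torus of $\phi_1^{-1}$ rather than of $\phi_1$; taking $\psi=\phi_1^{-1}$ (still a Hermitian isometry) or reversing the parametrization of the interval repairs this harmlessly.
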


An important ingredient in Li's theorem is a result of Tischler (see \cite{Ti}) 
stating that a compact manifold admitting a non-vanishing closed 1-form fibres 
over the circle. The above result indicates that co-K\"ahler manifolds are very special 
types of manifolds. However it can be very difficult to see whether a manifold is
such a mapping torus. In this paper, we will give another characterization of
co-K\"ahler manifolds which we hope will allow an easier identification.

\section{Parallel Vector Fields}\label{sec:parallel}
From now on, when we write a co-K\"ahler structure $(M^{2n+1},J,\xi,\eta,g)$,
we shall mean Li's associated integral and parallel structures. Let's now
employ an argument that goes back to \cite{Wel}, but which was resurrected
in \cite{Sad}. Consider the parallel vector field $\xi$ and its associated
flow $\phi_t$. Because $\xi$ is Killing, each $\phi_t$ is an isometry of
$(M,g)$. Therefore, in the isometry group ${\rm Isom}(M,g)$, the subgroup
generated by $\xi$, $C$, is singly generated. Since $M$ is compact, so is
${\rm Isom}(M,g)$ and this means that $C$ is a torus. Using harmonic forms
and the Albanese torus, Welsh \cite{Wel} actually shows that there is a
subtorus $T \subseteq C$ such that $M = T \times_G F$ where $G \subset T$
is finite and $F$ is a manifold. Following Sadowski \cite{Sad}, we can
modify the argument as follows.

Let $S^1 \subseteq C \subset {\rm Isom}(M,g)$ have associated vector field
$Y$. Because $S^1$ acts on $(M,g)$ by isometries, the vector field $Y$
is Killing. Now, we can choose $Y$ as close to $\xi$ as we like, so at
some point $x_0 \in M$, since $\eta(\xi)(x_0) \not = 0$, then
$\eta(Y)(x_0)\not = 0$ as well. But $\eta$ is harmonic and $Y$ is Killing,
so this means that $\eta(Y)(x)\not = 0$ for all $x \in M$. Hence, we may
take $\eta(Y)(x) > 0$ for all $x \in M$. Now let $\sigma$
be an orbit of the $S^1$ action. Then
$$\int_\sigma \eta = \int_0^1 \eta\left(\frac{d\sigma}{dt}\right)\, dt
= \int\eta(Y) \, dt > 0.$$
This says that the orbit map $\alpha\colon S^1 \to M$ defined by
$g \mapsto g \cdot x_0$ induces a non-trivial composition of homomorphisms
$$H_1(S^1;\mathbb R)  \stackrel{\alpha_*}{\to} H_1(M;\mathbb R)
\stackrel{\eta}{\to} H_1(S^1;\mathbb R),$$
where $d\eta=0$ defines an integral cohomology class $\eta \in H^1(M;\mathbb Z)
\cong [M,S^1]$. Here we use the standard identification of degree $1$ cohomology
with homotopy classes of maps from $M$ to $S^1$. Since $H_1(S^1;\mathbb Z)=\mathbb Z$,
this means that the integral homomorphism $\alpha_* \colon H_1(S^1;\mathbb Z)
\to H_1(M;\mathbb Z)$ is \emph{injective}. Such an action is said to be
\emph{homologically injective} (see \cite{CR}). Hence, we have

\begin{prop}\label{prop:hominjtoract}
A co-K\"ahler manifold $(M^{2n+1},J,\xi,\eta,g)$ with integral structure
supports a smooth homologically injective $S^1$ action.
\end{prop}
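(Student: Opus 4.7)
The plan is to produce the $S^1$ action by approximating the flow of $\xi$ inside the compact isometry group, and then verify homological injectivity directly from an orbit integral of the harmonic form $\eta$.

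First, I would invoke \lemref{lem:parallel}: since $\xi$ is Killing, its flow $\{\phi_t\}$ lies in $\mathrm{Isom}(M,g)$. Because $M$ is compact, so is $\mathrm{Isom}(M,g)$, and the closure of $\{\phi_t\}$ is therefore a compact connected abelian subgroup, i.e.\ a torus $T \subseteq \mathrm{Isom}(M,g)$ containing the infinitesimal generator $\xi$. Inside $T$ I would then choose a circle subgroup $S^1 \subseteq T$ whose infinitesimal generator $Y$ lies arbitrarily close to $\xi$ in the Lie algebra of $T$; this is possible because circle subgroups are dense in any torus. By construction, $Y$ is again a Killing vector field.

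Next, I would argue that $\eta(Y) > 0$ pointwise. Since $\eta(\xi) \equiv 1$, continuity of the map $Y \mapsto \eta(Y)$ gives $\eta(Y)(x_0) > 0$ at some $x_0 \in M$ once $Y$ is taken sufficiently close to $\xi$. At this point I would invoke the classical fact that on a closed Riemannian manifold, the pointwise pairing of a harmonic $1$-form with a Killing vector field is a \emph{constant} function. Combined with the harmonicity of $\eta$ from \lemref{lem:parallel} and the Killing property of $Y$, this forces $\eta(Y)$ to be a positive constant on $M$.

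With $\eta(Y)>0$ in hand, the homological injectivity of the orbit map $\alpha\colon S^1 \to M$, $g \mapsto g\cdot x_0$, is immediate. For any orbit $\sigma$ of the $S^1$ action,
\[
\int_\sigma \eta \;=\; \int_0^1 \eta\!\left(\tfrac{d\sigma}{dt}\right) dt \;=\; \int_0^1 \eta(Y)\, dt \;>\; 0.
\]
Viewing $\eta$ as the de Rham representative of an integral class in $H^1(M;\ZZ)\cong [M,S^1]$ (as arranged in Li's integral structure), this positivity says that the composition
\[
H_1(S^1;\RR) \xrightarrow{\alpha_*} H_1(M;\RR) \xrightarrow{\eta} H_1(S^1;\RR)
\]
is nonzero, so $\alpha_*$ is injective on real $H_1$; integrality of $\eta$ then upgrades this to injectivity on integral $H_1$, which is the definition of a homologically injective action.

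The main obstacle is the step showing $\eta(Y)$ is nowhere zero. Producing the torus $T$ and the orbit-integral calculation are essentially formal once the isometry group is compact, but the constancy of $\eta(Y)$ is where the co-K\"ahler hypothesis does real work, via the interaction between harmonicity of $\eta$ and the Killing nature of $Y$ (ultimately a Bochner-type identity). Everything else is topological packaging.
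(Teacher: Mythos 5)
Your proposal is correct and follows essentially the same route as the paper: approximate $\xi$ by the generator $Y$ of a circle subgroup of the torus closure of its flow in the compact group $\mathrm{Isom}(M,g)$, use harmonicity of $\eta$ plus the Killing property of $Y$ to get $\eta(Y)>0$ everywhere, and integrate $\eta$ over an orbit to conclude homological injectivity. Your explicit justification of the key step --- that the pairing of a harmonic $1$-form with a Killing field is constant (via $L_Y\eta=0$ and $d\eta=0$) --- is exactly the fact the paper uses implicitly in the sentence ``But $\eta$ is harmonic and $Y$ is Killing, so this means that $\eta(Y)(x)\neq 0$ for all $x\in M$.''
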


In fact, it can be shown that there is a homologically injective $T^k$ action
on $M$, where $T^k$ is Welsh's torus $T$. However, we shall focus on the
$S^1$-case since this will allow a connection to Li's mapping torus result.

\section{Sadowski's Transversally Equivariant Fibrations}\label{sec:crsplit}
Homologically injective actions were first considered by P. Conner and F. Raymond
in \cite{CR} (also see \cite{LR}) and were shown to lead to topological
product splittings up to finite cover (also see \cite{Op}). Homological
injectivity for a circle action is very unusual and this points out the extremely
special nature of co-K\"ahler manifolds. Here we want to make
use of the results in \cite{Sad} to achieve smooth splittings for co-K\"ahler
manifolds up to a finite cover. We will state the results of \cite{Sad}
only for the case we are interested in: namely, a mapping torus bundle
$M \to S^1$.

Let's begin by recalling that a bundle map $p\colon M \to S^1$ is a
\emph{transversally equivariant fibration} if there is a smooth $S^1$-action
on $M$ such that the orbits of the action are transversal to the fibres of
$p$ and $p(t\cdot x)-p(x)$ depends on $t \in S^1$ only. This latter condition is
simply the usual equivariance condition if we take an appropriate action of
$S^1$ on itself (see \cite[Remark 1.1]{Sad}). Sadowski's key lemma is the following.

\begin{lem}[{\cite[Lemma 1.3]{Sad}}]\label{lem:sadowski}
Let $p\colon M \to S^1$ be a smooth $S^1$-equivariant bundle map. Then the following
are equivalent;
\begin{enumerate}
\item The orbits of the $S^1$-action are transversal to the fibres of $p$:
\item $p_* \circ \alpha_*\colon \pi_1(S^1) \to \pi_1(S^1)$ is injective, where
$\alpha\colon S^1 \to M$ is the orbit map;
\item One orbit of the $S^1$-action is transversal to a fibre of $p$ at a
point $x_0 \in M$.
\end{enumerate}
\end{lem}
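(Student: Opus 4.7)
The plan is to extract from the equivariance hypothesis a genuine Lie group homomorphism $\phi\colon S^1 \to S^1$ and then show that each of (1), (2), (3) is equivalent to $\phi$ being nontrivial. The hypothesis that $p(t\cdot x)-p(x)$ depends on $t$ only allows me to define
\[
\phi(t) := p(t\cdot x)-p(x)
\]
for any fixed $x\in M$, with the right-hand side independent of the choice of $x$. A short manipulation using the group law of the $S^1$-action, applied once with basepoint $x$ and once with basepoint $s\cdot x$, yields $\phi(st)=\phi(s)+\phi(t)$; thus $\phi$ is a smooth homomorphism $S^1\to S^1$, necessarily of the form $\phi(t)=kt$ for some integer $k$.

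Next, I would identify each of (1), (2), (3) with the condition $k\neq 0$. Let $Y$ denote the fundamental vector field of the $S^1$-action, so that the tangent line to the orbit through $x$ is spanned by $Y_x$. Differentiating the identity $p(t\cdot x)=p(x)+\phi(t)$ at $t=0$ gives
\[
dp_x(Y_x)=\phi'(0)=k
\]
for every $x\in M$. Since a fibre of $p$ has codimension $1$ and the orbit is $1$-dimensional, transversality of the orbit to the fibre at $x$ is precisely the non-vanishing of $dp_x(Y_x)$; hence both (1) (transversality at every point) and (3) (transversality at one specific point $x_0$) are equivalent to $k\neq 0$. For (2), pick any basepoint $x_0$: then $p\circ\alpha(t)=p(x_0)+\phi(t)$, so the induced map $p_*\circ\alpha_*$ on $\pi_1(S^1)\cong\mathbb Z$ is multiplication by $k$, which is injective if and only if $k\neq 0$. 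This chain of equivalences finishes the proof.

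Once $\phi$ has been extracted, the argument is entirely formal, and the only mild obstacle is the first step: verifying the additivity $\phi(st)=\phi(s)+\phi(t)$ cleanly from the basepoint-independence of the defining formula. Beyond that, everything reduces to the standard fact that a non-trivial smooth homomorphism $S^1\to S^1$ has everywhere-nonzero derivative and nonzero degree, so no serious analytic or topological difficulty arises.
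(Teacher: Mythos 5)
Your argument is correct. Note that the paper itself offers no proof of this lemma: it is quoted verbatim from Sadowski's paper (Lemma 1.3 of \cite{Sad}) and used as a black box, so there is no in-text proof to compare against. Your proof is a clean, self-contained substitute. The key observation --- that the basepoint-independence of $p(t\cdot x)-p(x)$ forces it to be a continuous homomorphism $\phi(t)=kt$ of $S^1$, whence $dp_x(Y_x)=k$ is a \emph{global constant} --- is exactly what makes the pointwise condition (3), the everywhere condition (1), and the degree condition (2) collapse into the single statement $k\neq 0$; this matches the spirit of how the lemma is used in \thmref{thm:coverthm}, where homological injectivity (condition (2), as noted in \remref{rem:sadequiv}) is traded for transversality. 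Two minor points you may wish to make explicit: first, transversality of the orbit to the codimension-one fibre at $x$ is equivalent to $dp_x(Y_x)\neq 0$ even in the degenerate case $Y_x=0$ (there the orbit is the single point $x$, which lies in the fibre and cannot be transversal to a proper submanifold through it, consistent with $dp_x(Y_x)=0$); second, the identity $dp_x(Y_x)=\phi'(0)$ uses that $p$ is a submersion so that the fibre's tangent space is exactly $\ker dp_x$, which is guaranteed by the hypothesis that $p$ is a bundle map.
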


\begin{rem}\label{rem:sadequiv} Note the following. \newline
\vspace{-14pt}
\begin{enumerate}
\item Lemmas 1.1 and 1.2 of \cite{Sad} show that, in the situation of
\propref{prop:hominjtoract}, $\eta\colon M \to S^1$ is a transversally equivariant
bundle map.
\vskip3pt
\item Note also that, because $\pi_1(S^1) \cong H_1(S^1;\ZZ) \cong \ZZ$, the second
condition of \lemref{lem:sadowski} is really saying that the action is homologically
injective.
\end{enumerate}
\end{rem}

As pointed out in \cite{Li}, \emph{every smooth fibration $K\to M\stackrel{p}{\to} S^1$
can be seen as a mapping torus of a certain diffeomorphism} $\varphi\colon K\to K$,
(also see \propref{mapping:torus} below). The following is a distillation of
\cite[Proposition 2.1 and Corollary 2.1]{Sad}
in the case of a circle action.

\begin{thm}\label{thm:coverthm}
Let $M \stackrel{p}{\to} S^1$ be a smooth bundle projection from a smooth
closed manifold $M$ to the circle.
The following are equivalent:
\begin{enumerate}
\item The structure group of $p$ can be reduced to a finite cyclic group
$G=\ZZ_m \subseteq \pi_1(S^1)/({\rm Im}(p_*\circ \alpha_*))$
(i.e. the diffeomorphism $\varphi$ associated to the mapping torus $M \stackrel{p}{\to}
S^1$ has finite order);
\vskip3pt
\item The bundle map $p$ is transversally equivariant with respect to an
$S^1$-action on $M$, $A\colon S^1 \times M \to M$.
\end{enumerate}
Moreover, assuming (1) and (2), there is a finite $G$-cover $K \times S^1 \to M$
given by the action $(k,t)\mapsto A_t(k)$, where $G$ acts diagonally and by
translations on $S^1$.
\end{thm}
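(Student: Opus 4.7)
The plan is to prove the two implications separately, then read off the explicit finite cover from the argument for (2) $\Rightarrow$ (1).

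For (1) $\Rightarrow$ (2), I would start from the mapping torus description $M = K_\varphi = (K \times [0,1])/((x,0)\sim(\varphi(x),1))$ and use the hypothesis $\varphi^m = \id$ to form the obvious $m$-fold cyclic cover $\tilde M := (K \times [0,m])/((x,0)\sim(\varphi^m(x),m)) \cong K \times S^1$. Translation on the second factor gives a free $S^1$-action on $\tilde M$ that commutes with the generator $(x,t) \mapsto (\varphi(x), t+1)$ of the deck group $\ZZ_m$ of $\tilde M \to M$, hence descends to an $S^1$-action $A$ on $M$. Its orbits wrap the base $S^1$ with degree $m$, so they are transverse to the fibres of $p$, and by construction $p(A_t(x)) - p(x)$ depends only on $t$; thus $p$ is transversally equivariant.

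For (2) $\Rightarrow$ (1), \lemref{lem:sadowski} makes $p_* \circ \alpha_*\colon \pi_1(S^1) \to \pi_1(S^1)$ injective. Writing $\pi_1(S^1) = \ZZ$, let $m$ be the index of its image and let $q\colon \tilde S^1 \to S^1$ be the corresponding $m$-fold cover. Pulling back $p$ along $q$ produces $\tilde p\colon \tilde M \to \tilde S^1$, and the transverse equivariance condition allows the $S^1$-action to be lifted to $\tilde M$ with orbits wrapping $\tilde S^1$ exactly once. Choosing a basepoint over $0 \in \tilde S^1$ and setting $K = \tilde p^{-1}(0)$, I would define $\Phi\colon K \times S^1 \to \tilde M$ by $\Phi(k,t) = \tilde A_t(k)$. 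The degree-one condition combined with the identity $\tilde p(\tilde A_t(x)) = \tilde p(x) + t$ makes $\Phi$ a bijection, while transversality of orbits to fibres makes it a local diffeomorphism, hence a diffeomorphism. In particular, the $m$-fold cover $\tilde p$ is trivial as a bundle, exhibiting the monodromy of $p$ as a finite-order diffeomorphism and so reducing the structure group to $\ZZ_m$.

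The ``Moreover'' clause is immediate from this argument: the composition $K \times S^1 \xrightarrow{\Phi} \tilde M \to M$ is the claimed $G$-cover, and its deck group $\ZZ_m$ is generated by the lift of $(x,t) \mapsto (\varphi(x), t + 1/m)$, which acts diagonally by the finite-order $\varphi$ on $K$ and by translation by $1/m$ on $S^1$. The main technical obstacle will be the verification that $\Phi$ is a diffeomorphism in (2) $\Rightarrow$ (1): one must lift the $S^1$-action cleanly to the pullback, check that the transverse equivariance relation passes to $\tilde p$, and rule out two distinct points of a single fibre lying on a common lifted $S^1$-orbit. Each of these steps is routine modulo careful bookkeeping, and together they constitute the substance of Sadowski's Proposition 2.1 and Corollary 2.1 that \thmref{thm:coverthm} packages for our setting.
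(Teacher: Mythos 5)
Your argument is correct and is essentially the paper's own (i.e., Sadowski's): the winding action $A_s[k,t]=[k,t+ms]$ for $(1)\Rightarrow(2)$ and the orbit-evaluation map $(k,t)\mapsto A_t(k)$ as the finite cyclic cover for $(2)\Rightarrow(1)$ are exactly the constructions the paper uses. The only presentational difference is that you reach that cover by pulling $p$ back along the degree-$m$ cover of the base and trivializing the pullback, whereas the paper works directly with the fibre-stabilizing subgroup $G=\{g\in S^1\,|\,A_g(K)=K\}$ and uses the ``piece of an orbit'' argument to place $G$ inside $\pi_1(S^1)/({\rm Im}(p_*\circ\alpha_*))$; these are two dressings of the same construction.
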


\begin{proof}[Sketch of Proof {\rm (\cite{Sad})}]

($ 1 \Rightarrow 2$)  The bundle is classified by a map $S^1 \to
BG$ or, equivalently, by an element of $\pi_1(BG)=G=\ZZ_m$ (since $G$ is
abelian). Now $M$ may be written as a
mapping torus $K_\varphi$ for some diffeomorphism
$\varphi \in {\rm Diffeo}(K)$ of order $m$. (So $G$ is
the structure group of a mapping torus bundle). Define an $S^1$-action
by $A\colon S^1 \times M \to M$, $A(t,[k,s])=[k,s+mt]$. (Geometrically,
the action is simply winding around the mapping torus $m$ times until we are
back to the identity $\varphi^m$). Clearly, the action is transversally equivariant.

($ 2 \Rightarrow 1$)  Let $A_t\colon M \to M$ be the $S^1$-action
such that $p$ is transversally equivariant. Let $K$ be the fibre of $p$ and let
$$G=\{g \in S^1\,|\, A_g(K)=K\}.$$
Now, because orbits of the action are transversal to the fibre, $G$ is a
proper closed subgroup of $S^1$. Hence, $G=\ZZ_m=\langle g\,|\,g^m=1\rangle$
for some positive integer $m$. Also note that the transversally equivariant
condition saying $p(A_t(x))-p(x)$ only depends on $t$ implies that the
action carries fibres of $p$ to fibres of $p$. Moreover, fibres are
then mapped back to themselves by $G$. Hence, letting $G$ act diagonally on
$K \times S^1$ and by translations on $S^1$, we see that the action is free
and its restriction $A|\colon K \times S^1 \to M$ is a finite $G$-cover.
Now, if we take the piece of the orbit from
$x_0 \in K$ to $A_g(x_0)$ for fixed $x_0$ and $g \in G$, the projection
to $S^1$ gives an element in $\pi_1(S^1)=\ZZ$. Because the full orbit is
strictly longer than this piece, we see that the corresponding element
in $\pi_1(S^1)=\ZZ$ can only be in ${\rm Im}(p_*\circ \alpha_*)$ if $g = 1$.
Hence, $G \subseteq \pi_1(S^1)/({\rm Im}(p_*\circ \alpha_*))$ which is
finite due to homological injectivity.
\end{proof}

We then have the following consequence for co-K\"ahler manifolds from
\propref{prop:hominjtoract} and \thmref{thm:coverthm}.

\begin{thm}\label{thm:cosympsplit}
A compact co-K\"ahler manifold $(M^{2n+1},J,\xi,\eta,g)$ with integral structure
and mapping torus bundle $K \to M \to S^1$
splits as $M \cong S^1 \times_{\mathbb Z_m} K$,
where $S^1 \times K \to M$ is a finite cover with structure group
$\mathbb Z_m$ acting diagonally and by translations on the
first factor. Moreover, $M$ fibres over the circle $S^1/(\mathbb Z_m)$
with finite structure group.
\end{thm}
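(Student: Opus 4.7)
The proof should be a short assembly of the two results already established in the paper. First I would invoke \propref{prop:hominjtoract} to obtain, from the integral co-K\"ahler structure, a smooth homologically injective $S^1$-action on $M$. By \remref{rem:sadequiv}(1), Li's mapping torus projection---which is nothing other than $\eta\colon M \to S^1$---is transversally equivariant with respect to this $S^1$-action, so condition $(2)$ of \thmref{thm:coverthm} is in force.

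Next I would apply the implication $(2)\Rightarrow(1)$ of \thmref{thm:coverthm} to conclude that the structure group of the bundle reduces to a finite cyclic group $\ZZ_m \subseteq \pi_1(S^1)/{\rm Im}(p_* \circ \alpha_*)$, the quotient being finite precisely because of homological injectivity (cf.\ \remref{rem:sadequiv}(2)). The ``Moreover'' clause of \thmref{thm:coverthm} then delivers exactly the desired finite $\ZZ_m$-cover $K \times S^1 \to M$, realized by $(k,t) \mapsto A_t(k)$ with $\ZZ_m$ acting diagonally on $K \times S^1$ and by translations on the $S^1$-factor. Rewriting this cover as a Borel quotient yields the claimed identification $M \cong S^1 \times_{\ZZ_m} K$.

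For the last assertion, the translation action of $\ZZ_m \subset S^1$ on $S^1$ makes $S^1/\ZZ_m$ again a circle, and the second-coordinate projection $K \times S^1 \to S^1$ is $\ZZ_m$-equivariant; passing to quotients delivers a smooth fibre bundle $M = (K \times S^1)/\ZZ_m \to S^1/\ZZ_m$ with fibre $K$ and structure group the finite cyclic $\ZZ_m$. The whole argument is really book-keeping, and the only step demanding a moment's attention is matching Li's fibration $K \to M \to S^1$ with the transversally equivariant bundle map $\eta\colon M \to S^1$ produced by the parallel harmonic $1$-form---an identification that is precisely the content of \remref{rem:sadequiv}(1).
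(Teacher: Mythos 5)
Your proof is correct and follows essentially the same route as the paper, which likewise presents \thmref{thm:cosympsplit} as an immediate consequence of \propref{prop:hominjtoract}, \remref{rem:sadequiv}(1), and the implication $(2)\Rightarrow(1)$ of \thmref{thm:coverthm} together with its ``Moreover'' clause. Your closing paragraph deriving the fibration $M \to S^1/\ZZ_m$ from the $\ZZ_m$-equivariant second-factor projection correctly fills in a detail the paper leaves implicit.
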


Note that \thmref{thm:cosympsplit} provides the following.

\begin{cor}\label{cor:diagram cover}
For a compact co-K\"ahler manifold $(M^{2n+1},J,\xi,\eta,g)$ with integral structure
and mapping torus bundle $K \to M \to S^1$, there is a commutative diagram of
fibre bundles:
$$\xymatrix{
K \ar[d]_= \ar[r] & S^1 \times K \ar[d]_{\times m} \ar[r] & S^1 \ar[d]_{\times m}\\
K \ar[r] & K_\psi \ar[r] & S^1  \quad .    }
$$
where $K_\psi \cong M$ according to \thmref{thm:maptor} and the notation
${\times m}$ denotes an $\ZZ_m$-covering.
\end{cor}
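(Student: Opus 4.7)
The plan is to read the diagram directly off of \thmref{thm:cosympsplit}. That theorem supplies a finite $\ZZ_m$-covering $\pi\colon S^1\times K \to M$ in which the generator of $\ZZ_m = \langle g\rangle$ acts diagonally: as translation by $1/m$ on the $S^1$ factor and as a diffeomorphism $\psi\colon K\to K$ of order $m$ on the $K$ factor. Taking this $\pi$ as the middle vertical arrow, the top row of the diagram becomes the trivial bundle $K \to S^1\times K \to S^1$, with the right map being $\mathrm{pr}_{S^1}$.

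For the right square, observe that $\mathrm{pr}_{S^1}$ is $\ZZ_m$-equivariant with respect to the given action on the source and the translation action of $\ZZ_m$ on the target $S^1$, so it descends to a map $\bar p\colon M \to S^1/\ZZ_m$. Identifying the quotient $S^1/\ZZ_m$ with $S^1$ via $[t]\mapsto mt \bmod 1$ converts the canonical quotient map on the right into the degree-$m$ covering labeled $\times m$ in the diagram, and in these coordinates the descended map $\bar p$ exhibits $M$ as the mapping torus $(S^1\times K)/\ZZ_m$ of $\psi$. By \thmref{thm:maptor} this is the same bundle $K_\psi \to S^1$ realized by Li, so the right square commutes by construction.

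For the left square, each fibre $\{t_0\}\times K$ of the trivial bundle is carried by $\pi$ into a single fibre of $\bar p$, and since $\ZZ_m$ acts freely the restriction is a diffeomorphism onto that fibre. Under the natural identification of both fibres with $K$ (taking $t_0$ to be a chosen basepoint of $S^1$) the left vertical arrow is the identity, and the square commutes. The only non-bookkeeping point is the identification of the bundle $\bar p\colon M \to S^1$ produced by the $\ZZ_m$-quotient with Li's $K_\psi \to S^1$; this is the expected main obstacle, but it dissolves once \thmref{thm:cosympsplit} is granted, because both descriptions present $M$ as the mapping torus of the same finite-order monodromy $\psi$ on $K$.
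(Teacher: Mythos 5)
Your proposal is correct and follows essentially the same route as the paper, which presents the corollary as an immediate consequence of \thmref{thm:cosympsplit}: take the $\ZZ_m$-cover $S^1\times K\to M$ from that theorem, note that the projection to $S^1$ is equivariant for the translation action and so descends to the degree-$m$ covering of circles, and identify the fibres. Your write-up simply makes explicit the bookkeeping (the identification $S^1/\ZZ_m\cong S^1$ and the matching of the quotient bundle's monodromy with Li's $\psi$) that the paper leaves implicit.
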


\begin{rem}
Although we have used the very special results of \cite{Sad} above, observe that
a version of \thmref{thm:cosympsplit} may be proved in the continuous case
using the Conner-Raymond Splitting Theorem \cite{CR}. In this case, we obtain a
finite cover $S^1 \times Y \to M$, where $Y \to K$ is a homotopy equivalence.
This type of result affords a possibility of weakening the stringent assumptions
on co-K\"ahler manifolds with a view towards homotopy theory rather than geometry.
\end{rem}

\section{Betti Numbers}\label{sec:betti}
A main result of \cite{CDM} was the fact that the Betti numbers of co-K\"ahler manifolds
increase up to the middle dimension: $b_1 \leq b_2 \leq \ldots\leq b_n=b_{n+1}$ for $M^{2n+1}$.
The argument in \cite{CDM} was difficult, involving Hodge theory and a type of Hard
Lefschetz Theorem for co-K\"ahler manifolds. In \cite{Li}, the mapping torus description of
co-K\"ahler manifolds yielded the result topologically through homology properties of
the mapping torus. Here, we would like to see the Betti number result as a natural
consequence of \thmref{thm:cosympsplit}. Recall a basic result
from covering space theory.

\begin{lem}\label{lem:covercoho}
If $\overline X \to X$ is a finite $G$-cover, then
$$H^*(X;\QQ) = H^*(\overline X;\QQ)^G,$$
where the designation $H^G$ denotes the fixed algebra under the action of the
covering transformations $G$.
\end{lem}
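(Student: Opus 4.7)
The plan is to use the classical transfer map, which is the standard device for comparing cohomology of a space and that of a finite cover. The covering map $\pi\colon \overline X\to X$ induces $\pi^*\colon H^*(X;\QQ)\to H^*(\overline X;\QQ)$, and since $\pi\circ g=\pi$ for every deck transformation $g\in G$, the image of $\pi^*$ lies in the fixed subalgebra $H^*(\overline X;\QQ)^G$. The whole task is therefore to produce a two-sided $\QQ$-inverse to $\pi^*$ with image landing (correctly) in the invariants, and the transfer delivers exactly this.

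First I would construct the transfer at the chain level. Because $\Delta^n$ is simply connected, every singular simplex $\sigma\colon\Delta^n\to X$ lifts to precisely $|G|$ simplices $\widetilde\sigma_1,\dots,\widetilde\sigma_{|G|}$ in $\overline X$, permuted freely and transitively by $G$. Set
\[
\tau_*\colon C_*(X;\QQ)\longrightarrow C_*(\overline X;\QQ),\qquad
\tau_*(\sigma)=\sum_{i=1}^{|G|}\widetilde\sigma_i.
\]
Routine verification shows that $\tau_*$ is a chain map (boundaries of lifts are lifts of boundaries). The two compositions satisfy
\[
\pi_*\circ\tau_*=|G|\cdot\mathrm{id}_{C_*(X)},\qquad
\tau_*\circ\pi_*=\sum_{g\in G} g_*\ \text{on } C_*(\overline X),
\]
the first because each $\widetilde\sigma_i$ projects back to $\sigma$, the second because the full set of lifts of $\pi\circ\widetilde\sigma$ is the $G$-orbit of $\widetilde\sigma$.

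Dualizing to cohomology, I obtain a $\QQ$-linear map $\tau^*\colon H^*(\overline X;\QQ)\to H^*(X;\QQ)$ satisfying
\[
\tau^*\circ\pi^*=|G|\cdot\mathrm{id}_{H^*(X;\QQ)},\qquad
\pi^*\circ\tau^*=\sum_{g\in G} g^*\ \text{on } H^*(\overline X;\QQ).
\]
The first identity, together with invertibility of $|G|$ in $\QQ$, makes $\pi^*$ injective. For surjectivity onto $H^*(\overline X;\QQ)^G$, given $\alpha$ invariant set $\beta=\frac{1}{|G|}\tau^*(\alpha)$; then $\pi^*(\beta)=\frac{1}{|G|}\sum_{g\in G}g^*(\alpha)=\alpha$. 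Thus $\pi^*$ restricts to an isomorphism $H^*(X;\QQ)\xrightarrow{\ \cong\ }H^*(\overline X;\QQ)^G$ as claimed.

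I expect no serious obstacle here: this is completely standard once the transfer is in hand. The only point that requires any care is the existence of exactly $|G|$ lifts of each singular simplex (which is where the finiteness of the cover and the simple connectivity of $\Delta^n$ enter) and the necessity of inverting $|G|$, which forces rational (rather than integral) coefficients.
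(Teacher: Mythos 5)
Your proof is correct and complete: the chain-level transfer, the two composition identities, and the use of $1/|G|$ to identify the image of $\pi^*$ with the $G$-invariants are exactly the standard argument. The paper itself states this lemma without proof, citing it as a basic fact of covering space theory, and the transfer argument you give is precisely the canonical justification it is relying on; note also that since the isomorphism is realized by $\pi^*$ (a ring map), the identification is one of algebras, as the statement requires.
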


In order to see the Betti number relations, we need to know that
the ``K\"ahler class'' on $K$ is invariant under the covering
transformations. The following result guarantees that such a class
exists.

\begin{lem}\label{lem:invclass}
There exists a class $\bar{\omega} \in H^2(K;\RR)^G\subset H^2(S^1\times K;\RR)$ which pulls back
to $\omega \in H^2(K;\RR)$ via the inclusion $K \to S^1\times K$
contained in  \corref{cor:diagram cover}.
\end{lem}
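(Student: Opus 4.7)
The plan is to exhibit $\bar{\omega}$ explicitly as the pullback of the Kähler form of $K$ along the projection $\pi_2 \colon S^1 \times K \to K$, and then check that this class is $G$-invariant and restricts correctly.

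First I would unpack the $G$-action on $S^1 \times K$ coming from \thmref{thm:cosympsplit} and \corref{cor:diagram cover}. The generator of $G = \ZZ_m$ acts diagonally as $(s,k) \mapsto (s + 1/m, \psi(k))$, where $\psi \colon K \to K$ is the monodromy of the mapping torus in \thmref{thm:maptor}. The crucial point is that $\psi$ is a \emph{Hermitian isometry} of the Kähler manifold $(K,h)$: it preserves both the complex structure and the Hermitian metric, and therefore it preserves the fundamental form $\omega(X,Y) = h(JX,Y)$. Consequently $\psi^* \omega = \omega$ as differential forms, and in particular the cohomology class $[\omega] \in H^2(K;\RR)$ is fixed by $\psi^*$, i.e.\ $[\omega] \in H^2(K;\RR)^{\psi^*} = H^2(K;\RR)^G$.

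Next I would set $\bar{\omega} := \pi_2^*[\omega] \in H^2(S^1 \times K;\RR)$. Under the Künneth decomposition
\[
H^2(S^1 \times K;\RR) \;\cong\; H^2(K;\RR) \;\oplus\; \bigl(H^1(S^1;\RR)\otimes H^1(K;\RR)\bigr),
\]
the class $\bar{\omega}$ lies precisely in the first summand, so identifying $H^2(K;\RR)^G$ with its image in $H^2(S^1\times K;\RR)$ under $\pi_2^*$ makes sense. To see that $\bar{\omega}$ is $G$-invariant for the diagonal action, note that translations on $S^1$ are homotopic to the identity, so $G$ acts trivially on $H^*(S^1;\RR)$; therefore the induced action on $H^*(S^1\times K;\RR)$ reduces to $\mathrm{id}\otimes \psi^*$, and we already observed that $\psi^*[\omega] = [\omega]$.

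Finally, the inclusion $\iota \colon K \hookrightarrow S^1\times K$, $k \mapsto (s_0,k)$, satisfies $\pi_2 \circ \iota = \mathrm{id}_K$, so
\[
\iota^* \bar{\omega} \;=\; \iota^*\pi_2^*[\omega] \;=\; [\omega],
\]
which is exactly the required pullback property. There is no real obstacle here once one has correctly recorded the $G$-action from \thmref{thm:cosympsplit}; the only substantive ingredient beyond bookkeeping is the elementary observation that a Hermitian isometry fixes the fundamental $2$-form, which is what forces $[\omega]$ to be $G$-invariant in the first place.
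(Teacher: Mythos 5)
Your argument is correct, but it runs in the opposite direction from the paper's. The paper starts with the closed fundamental $2$-form on $M$ itself: its class pulls back under the covering $\theta\colon S^1\times K\to M$ to a class that is \emph{automatically} $G$-invariant (it comes from the quotient), and then the K\"unneth decomposition $H^2(S^1\times K;\RR)\cong H^2(K;\RR)\oplus H^1(S^1;\RR)\otimes H^1(K;\RR)$, together with the homotopy triviality of $G$ on the $S^1$ factor, forces each K\"unneth component --- in particular the $H^2(K;\RR)$-component $\bar\omega$ --- to be separately $G$-invariant. That argument is purely cohomological and never needs to know how $G$ acts geometrically on $K$. You instead define $\bar\omega:=\pi_2^*[\omega]$ and derive invariance from the assertion that the generator of $G$ acts on $K$ by a Hermitian isometry, hence fixes $[\omega]$. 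The two constructions yield the same class (both lie in the $H^2(K;\RR)$ K\"unneth summand and restrict to $[\omega]$ on a fibre), so your route is legitimate; what it buys is a concrete formula for $\bar\omega$, while the paper's route buys robustness --- it would work verbatim for Li's co-symplectic case, or for any closed invariant $2$-form downstairs.

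The one step you should shore up is precisely the input the paper's proof avoids: why does the generator of $G$ act on $K$ preserving the K\"ahler class? Identifying it with ``the monodromy $\psi$ of \thmref{thm:maptor}'' is plausible but not free: in \thmref{thm:coverthm} the group $G$ is defined as the stabilizer of a fibre inside the geometric $S^1$-action of \secref{sec:parallel}, not via Li's return map, so the identification with $\psi$ (even up to isotopy) is itself a claim. The clean fix is to note that the $S^1$-action lies in the closure of the flow of $\xi$ inside ${\rm Isom}(M,g)$, and that flow preserves $J$ and $g$ (since $L_\xi J=0$ and $L_\xi g=0$), hence preserves $\omega(\cdot,\cdot)=g(J\cdot,\cdot)$; preservation of $J$ and $g$ is a closed condition, so every element of $G$ preserves $\omega$ and therefore its restriction to the fibre $K$. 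With that sentence added, your proof is complete.
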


\begin{proof}
Let $\theta\colon S^1 \times K \to M$ denote the $G=\ZZ_m$-
cover of \thmref{thm:cosympsplit} and \corref{cor:diagram cover}.
Then $\theta^* \omega = \eta \times \alpha + \bar{\omega}$, where
$\eta$ generates $H^1(S^1;\RR)$, $\alpha \in H^1(K;\RR)$ and
$\bar{\omega} \in H^2(K;\RR)$. Note that $\bar{\omega}$
pulls back to $\omega \in H^2(K;\RR)$. Also, $\theta^*\omega$ is
$G$-invariant, so for each $g \in G$, we have
\begin{align*}
\alpha \times \eta + \bar{\omega} & = g^*(\alpha \times \eta +
\bar{\omega}) \\
& = g^*(\alpha) \times g^*(\eta) + g^*(\bar{\omega}) \\
&= g^*(\alpha) \times \eta + g^*(\bar{\omega}),
\end{align*}
using the fact that $G$ acts on $K \times S^1$ diagonally and
homotopically trivially on $S^1$. We then get
$$
(\alpha-g^*(\alpha)) \times \eta=g^*(\bar{\omega})-\bar{\omega}.
$$
This also means that
$g^*(\bar{\omega})-\bar{\omega} \in H^2(K;\RR)$
and $(\alpha-g^*(\alpha)) \times \eta \in H^1(K;\RR) \otimes
H^1(S^1)$. Thus, the only way the equality above can hold is that
both sides are zero. Hence, $\bar{\omega}$ is $G$-invariant.
\end{proof}

\begin{thm}\label{thm:betti}
If $(M^{2n+1},J,\xi,\eta,g)$ is a compact co-K\"ahler manifold with integral structure
and splitting $M \cong K \times_{\mathbb Z_m} S^1$, then
$$H^*(M;\RR) = H^*(K;\RR)^G \otimes H^*(S^1;\RR),$$
where $G = \ZZ_m$. Hence, the Betti numbers of $M$ satisfy:
\vskip5pt

\noindent (1)\quad $b_s(M) = \overline b_s(K) + \overline b_{s-1}(K)$, where
$\overline b_s(K)$ denotes the dimension of $G$-invariant cohomology $H^s(K;\RR)^G$;
\vskip7pt
\noindent (2)\quad $b_1(M) \leq b_2(M) \leq \ldots \leq b_n(M) = b_{n+1}(M)$.
\end{thm}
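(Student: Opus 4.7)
The plan is to derive the cohomology identity from the finite cover of \corref{cor:diagram cover} and then extract the Betti number estimates using Kähler theory on $K$ together with the invariant class supplied by \lemref{lem:invclass}.

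First I would apply \lemref{lem:covercoho} to the finite $G$-cover $\theta\colon S^1 \times K \to M$ of \thmref{thm:cosympsplit}, obtaining $H^*(M;\RR) \cong H^*(S^1 \times K;\RR)^G$. The Künneth formula identifies $H^*(S^1 \times K;\RR) \cong H^*(S^1;\RR) \otimes H^*(K;\RR)$, and because $G$ acts on the $S^1$ factor by translations, its induced action on $H^*(S^1;\RR)$ is trivial. Hence the $G$-fixed subalgebra splits as $H^*(K;\RR)^G \otimes H^*(S^1;\RR)$, which is the main identity of the theorem. Since $H^*(S^1;\RR)$ is one-dimensional in degrees $0$ and $1$ and vanishes otherwise, this decomposition immediately produces $b_s(M) = \overline b_s(K) + \overline b_{s-1}(K)$, establishing (1).

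For (2), the essential input is \lemref{lem:invclass}, which shows that the Kähler class $\omega \in H^2(K;\RR)$ is $G$-invariant. Since $K$ is a compact Kähler manifold of real dimension $2n$, the Hard Lefschetz Theorem provides injectivity of $L\colon H^s(K;\RR) \to H^{s+2}(K;\RR)$, $\phi \mapsto \phi \cup \omega$, for $s \leq n-1$. Because $\omega$ is $G$-invariant, $L$ commutes with the $G$-action and restricts to an injection on the $G$-fixed subspaces, so $\overline b_s(K) \leq \overline b_{s+2}(K)$ for $s \leq n-1$. Substituting into (1) yields $b_s(M) \leq b_{s+1}(M)$ for $1 \leq s \leq n-1$. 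For the middle equality $b_n(M) = b_{n+1}(M)$, it suffices by (1) to show $\overline b_{n-1}(K) = \overline b_{n+1}(K)$; this follows because $G$ acts on $K$ by Hermitian isometries, hence by orientation-preserving diffeomorphisms, so Poincaré duality on $K$ is $G$-equivariant and descends to $H^*(K;\RR)^G$.

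The only genuine subtlety is guaranteeing that the classical Kähler machinery restricts correctly to $G$-invariants: Hard Lefschetz does so thanks to \lemref{lem:invclass}, and Poincaré duality does so because the deck transformations preserve orientation. Everything else reduces to Künneth and standard book-keeping with invariant subspaces.
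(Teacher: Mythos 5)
Your proof is correct and follows essentially the same route as the paper: \lemref{lem:covercoho} plus K\"unneth for the tensor decomposition and part (1), and the $G$-invariant K\"ahler class of \lemref{lem:invclass} feeding into Hard Lefschetz on the invariant subalgebra for the monotonicity in part (2). The only minor divergence is that you settle $b_n(M)=b_{n+1}(M)$ via $G$-equivariant Poincar\'e duality (where the paper leaves this step implicit); it also follows directly from the $G$-equivariant Hard Lefschetz isomorphism $\cdot\,\omega\colon H^{n-1}(K;\RR)\to H^{n+1}(K;\RR)$, which keeps the argument uniform with the rest of your part (2).
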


\begin{proof}
\lemref{lem:covercoho} and the fact that $G$ acts by translations
(so homotopically trivially) on $S^1$ produce $H^*(M;\RR) = H^*(K;\RR)^G \otimes
H^*(S^1;\RR)$. If we denote the Betti numbers of the $G$-invariant cohomology
by $\overline b$, then the tensor product splitting gives
$$b_s(M) = \overline b_s(K) + \overline b_{s-1}(K),$$
using the fact that $\widetilde H^1(S^1;\RR)=\RR$ and vanishes otherwise.

Let $\{\alpha_1, \ldots,\alpha_k\}$ be a basis for $H^{s-2}(K;\RR)^G$.
According to \lemref{lem:invclass}, the class $\omega \in H^2(M;\RR)$, which comes from $H^2(K;\RR)$,
provides a $G$-invariant class in $H^2(K;\RR)$. Furthermore,
since $K$ is compact K\"ahler, $H^*(K;\RR)$ obeys the Hard Lefschetz Property with respect
to $\omega$. Namely, for $j \leq n$, multiplication by powers of $\omega$,
$$
\cdot \ \omega^{n-j} \colon H^j(K;\RR) \to H^{2n-j}(K;\RR),
$$
is an isomorphism. In particular, this means that multiplication by each
power $\omega^{s}$, $s \leq n-j$, must be injective. Therefore, for any
$s \leq n$, we have an injective homomorphism $\cdot \ \omega\colon
H^{s-2}(K;\RR) \to H^s(K;\RR)$. Thus, since $\omega \in H^2(K;\RR)^G$,
we obtain a linearly independent set $\{\omega \alpha_1,\ldots,\omega\alpha_k\}
\subset H^s(K;\RR)^G$. But then we see that, for all $s \leq n$,
$$\overline b_{s-2}(K) \leq \overline b_s(K).$$
Now, let's compare Betti numbers of $M$. We obtain
\begin{align*}
b_s(M)-b_{s-1}(M) & = \overline b_s(K) + \overline b_{s-1}(K) - \overline b_{s-1}(K)
- \overline b_{s-2}(K) \\
& = \overline b_s(K) - \overline b_{s-2}(K) \\
& \geq 0,
\end{align*}
by the argument above. Hence, the Betti numbers of $M$ increase up to the
middle dimension.
\end{proof}

In \cite{CDM} it was shown that the first Betti number of a co-K\"ahler
manifold is always odd. (Indeed, it was shown later that, for $M$ co-K\"ahler,
$S^1\times M$ is K\"ahler, so this also follows by Hard Lefschetz). Here, we can
infer this as a simple consequence of our splitting. Now, $K$ is a K\"ahler manifold,
so ${\rm dim}(H^1(K;\RR))$ is even and there is
a non-degenerate skew symmetric bilinear (i.e. symplectic) form $b\colon H^1(K;\RR) \otimes H^1(K;\RR)
\to H^{2n}(K;\RR) \cong \RR$ defined by
$$b(\alpha,\beta) = \alpha \cdot \beta \cdot \omega^{n-1}.$$
Let $G = \ZZ_m =\langle\varphi\,|\,\varphi^m=1\rangle$, note that invariance of $\omega$
implies $\varphi^*\omega=\omega$ and compute:
\begin{align*}
\varphi^*(b)(\alpha,\beta) & = b(\varphi^*\alpha,\varphi^*\beta) \\
& = \varphi^*\alpha \cdot \varphi^*\beta \cdot \omega^{n-1} \\
& = \varphi^*\alpha \cdot \varphi^*\beta \cdot \varphi^*\omega^{n-1} \\
& = \varphi^*(\alpha \cdot \beta \cdot \omega^{n-1}) \\
& = \alpha \cdot \beta \cdot \omega^{n-1} \\
& = b(\alpha,\beta),
\end{align*}
where the second last line comes from the fact that
$\alpha \cdot \beta \cdot \omega^{n-1}=k\cdot \omega^n$ and
$\varphi^*\omega^n = \omega^n$. Hence, $\varphi^*$ is a symplectic linear
transformation on the symplectic vector space $H^1(K;\RR)$. But now
the Symplectic Eigenvalue Theorem says that the eigenvalue $+1$ occurs
with even multiplicity. Thus $\overline b_1(K) = {\rm dim}(H^1(K;\RR)^G)$
is even. Hence, by \thmref{thm:betti} (1), we have the following result.

\begin{cor}\label{cor:firsthom}
The first Betti number of a compact co-K\"ahler manifold is odd.
\end{cor}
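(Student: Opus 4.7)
The plan is to combine the Betti number formula from \thmref{thm:betti}(1) with a symplectic linear algebra argument on $H^1(K;\RR)$. From \thmref{thm:betti}(1) applied at $s=1$ we get
\[
b_1(M) = \overline{b}_1(K) + \overline{b}_0(K),
\]
and since $K$ is a connected compact K\"ahler manifold, $\overline{b}_0(K) = b_0(K) = 1$. So the problem reduces to showing that $\overline{b}_1(K) = \dim H^1(K;\RR)^G$ is even, where $G = \ZZ_m$ is the cyclic group of covering transformations from \thmref{thm:cosympsplit}.

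To do this, I would introduce the bilinear form $b \colon H^1(K;\RR) \otimes H^1(K;\RR) \to \RR$ defined by $b(\alpha,\beta) = \alpha \cdot \beta \cdot \omega^{n-1}$, where $\omega \in H^2(K;\RR)$ is the K\"ahler class. This form is skew-symmetric because $\alpha,\beta$ have odd degree, and it is non-degenerate by the Hard Lefschetz Property for $K$: the map $\cdot \omega^{n-1} \colon H^1(K;\RR) \to H^{2n-1}(K;\RR)$ is an isomorphism, and Poincar\'e duality pairs $H^1$ with $H^{2n-1}$ perfectly. Thus $b$ endows $H^1(K;\RR)$ with the structure of a symplectic vector space, so $\dim H^1(K;\RR)$ is even (recovering the classical fact).

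Next, I would verify that the generator $\varphi \in G$ preserves $b$. The key input is \lemref{lem:invclass}, which supplies a $G$-invariant class $\bar{\omega} \in H^2(K;\RR)^G$ pulling back to the K\"ahler class; thus $\varphi^*\omega = \omega$, hence $\varphi^*\omega^{n-1}=\omega^{n-1}$. A one-line computation then shows $\varphi^*(b)(\alpha,\beta) = b(\alpha,\beta)$, so $\varphi^*$ acts as a symplectic linear transformation on $H^1(K;\RR)$.

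The final step invokes the Symplectic Eigenvalue Theorem: for a symplectic linear automorphism, each real eigenvalue $\lambda \in \{-1,+1\}$ occurs with even multiplicity (nonreal eigenvalues come in conjugate pairs, and pairs $\lambda,\lambda^{-1}$ are preserved). In particular the $+1$-eigenspace $H^1(K;\RR)^{\varphi^*}$ has even dimension. Since $G$ is cyclic, generated by $\varphi$, this eigenspace coincides with $H^1(K;\RR)^G$, so $\overline{b}_1(K)$ is even, and $b_1(M) = \overline{b}_1(K) + 1$ is odd. The only mildly delicate step is the non-degeneracy of $b$ and the invocation of the Symplectic Eigenvalue Theorem, both of which are standard; the rest is bookkeeping once \lemref{lem:invclass} and \thmref{thm:betti} are in hand.
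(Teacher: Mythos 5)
Your proof is correct and takes essentially the same route as the paper: the same symplectic pairing $b(\alpha,\beta)=\alpha\cdot\beta\cdot\omega^{n-1}$ on $H^1(K;\RR)$, the same use of \lemref{lem:invclass} to see that $\varphi^*$ preserves $b$, and the same appeal to the Symplectic Eigenvalue Theorem combined with \thmref{thm:betti}(1) to conclude that $b_1(M)=\overline{b}_1(K)+1$ is odd. Your only additions are the explicit justifications of non-degeneracy of $b$ via Hard Lefschetz and Poincar\'e duality, which the paper leaves implicit.
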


\section{Fundamental Groups of Co-K\"ahler Manifolds}\label{sec:fundgps}
An important question about compact K\"ahler manifolds is exactly what groups
arise as their fundamental groups. For instance, every finite group is
the fundamental group of a K\"ahler manifold, while a free group on more than
one generator cannot be the fundamental group of a K\"ahler manifold (see \cite{ABCKT}
for more properties of these groups).
Li's mapping torus result shows that the fundamental group of a compact co-K\"ahler
manifold is always a semidirect product of the form $H\rtimes_{\psi}\ZZ$,
where $H$ is the fundamental group of a K\"ahler manifold. As an alternative,
because the finite cover of \thmref{thm:cosympsplit} corresponds to the subgroup
${\mathrm{Ker}}(\pi_1(M) \to \ZZ_m)$, \thmref{thm:cosympsplit} implies the following.

\begin{thm}\label{thm:fundgp}
If $(M^{2n+1},J,\xi,\eta,g)$ is a compact co-K\"ahler manifold with integral structure
and splitting $M \cong K \times_{\mathbb Z_m} S^1$, then $\pi_1(M)$
has a subgroup of the form $H \times \ZZ$, where $H$ is the fundamental
group of a compact K\"ahler manifold, such that the quotient
$$\frac{\pi_1(M)}{H \times \ZZ} $$
is a finite cyclic group.
\end{thm}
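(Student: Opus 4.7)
The plan is to read off the fundamental group statement directly from the finite covering $K\times S^1\to M$ produced in \thmref{thm:cosympsplit}. A finite regular cover with deck group $\ZZ_m$ corresponds, at the level of fundamental groups, to a normal subgroup of index $m$ sitting in a short exact sequence
\[
1\to \pi_1(K\times S^1)\to \pi_1(M)\to \ZZ_m\to 1,
\]
so the subgroup I want to exhibit is the image of $\pi_1(K\times S^1)$ in $\pi_1(M)$ under the map induced by the covering projection.

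First I would invoke \thmref{thm:cosympsplit} to obtain the smooth finite $\ZZ_m$-cover $\theta\colon K\times S^1\to M$ whose deck transformation group acts freely and properly discontinuously. Because $\theta$ is a covering, $\theta_*\colon \pi_1(K\times S^1)\to\pi_1(M)$ is injective, and the image is a normal subgroup of index $m$ with quotient $\ZZ_m$; this is standard covering space theory and requires no additional work. Next I would identify $\pi_1(K\times S^1)\cong \pi_1(K)\times\pi_1(S^1)\cong H\times \ZZ$, setting $H:=\pi_1(K)$. Since $K$ is the fibre in Li's mapping torus structure (\thmref{thm:maptor}), it is a compact K\"ahler manifold, so $H$ is by definition the fundamental group of a compact K\"ahler manifold.

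Combining these observations, the subgroup $\theta_*(\pi_1(K\times S^1))\subseteq \pi_1(M)$ is isomorphic to $H\times\ZZ$ and the quotient $\pi_1(M)/\theta_*(\pi_1(K\times S^1))$ is isomorphic to the deck group $\ZZ_m$, which is finite cyclic. This gives exactly the statement of the theorem.

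There is essentially no technical obstacle here; the entire content has been packaged into \thmref{thm:cosympsplit}. The only small point to be careful about is the identification of the quotient with $\ZZ_m$: one needs that the $\ZZ_m$-action on $K\times S^1$ (diagonal, translating the $S^1$-factor) is free, which is guaranteed by the construction in the proof of \thmref{thm:coverthm} since the subgroup $G=\{g\in S^1\,|\,A_g(K)=K\}$ acts freely on $S^1$ by translations of order $m$ and hence diagonally freely on $K\times S^1$.
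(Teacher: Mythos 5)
Your proposal is correct and is essentially the paper's own argument: the paper proves this theorem in one line by observing that the finite cover of \thmref{thm:cosympsplit} corresponds to the subgroup $\mathrm{Ker}(\pi_1(M)\to\ZZ_m)\cong\pi_1(K\times S^1)\cong\pi_1(K)\times\ZZ$, which is exactly the covering-space reasoning you spell out. Your added remarks on injectivity of $\theta_*$, normality of the image, and freeness of the diagonal $\ZZ_m$-action are just the standard details the paper leaves implicit.
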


\subsection{Co-K\"ahler manifolds with transversally positive definite Ricci tensor}\label{sec:b1=1}

Now let's see how to use our general approach to recover a result of De Le\'on and Marrero 
(\cite{DM}) concerning compact co-K\"ahler manifolds with transversally positive definite 
Ricci tensor. Let $(M^{2n+1},J,\xi,\eta,g)$ be an almost contact metric manifold and let 
$\F$ be the codimension 1 foliation $\ker(\eta)$. Let $T\F$ be the vector subbundle of the 
tangent bundle of $M$ consisting on vectors that are tangent to $\F$: at a point $x\in M$, then
$$
T_x\F=\{v\in T_xM \ | \ \eta_x(v)=0\}.
$$
Let $S$ be the Ricci curvature tensor of $M$. $S$ is called \emph{transversally positive 
definite} if $S_x$ is positive definite on $T_x\F$ for all $x\in M$. In \cite{DM}, the 
authors prove the following result.

\begin{thm}[{\cite[Theorem 3.2]{DM}}]\label{thm:DM}
If $M$ is a compact co-K\"ahler manifold with transversally positive definite Ricci tensor,
then $\pi_1(M)$ is isomorphic to $\ZZ$.
\end{thm}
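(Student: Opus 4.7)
The plan is to transfer the transversal positivity hypothesis from $M$ to the K\"ahler fibre $K$ supplied by \thmref{thm:maptor}, invoke a classical theorem of Kobayashi to kill $\pi_1(K)$, and then read $\pi_1(M)$ off the mapping torus presentation.

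First I would verify that the Ricci tensor of the K\"ahler manifold $(K,h)$ is positive definite. By \lemref{lem:parallel} the vector field $\xi$ is parallel, so the dual 1-form $\eta$ is parallel as well, and consequently the foliation $\F=\ker\eta$ is totally geodesic: for $Y\in T\F$ and any vector field $X$ one has $\eta(\nabla_X Y)=X\eta(Y)-(\nabla_X\eta)(Y)=0$. Parallelism of $\xi$ also gives $R^M(\,\cdot\,,\,\cdot\,)\xi\equiv 0$, so the $\xi$-direction contributes nothing to the trace defining $\mathrm{Ric}^M$. Combined with the Gauss equation for the totally geodesic leaf, whose induced metric is Li's K\"ahler metric on $K$, these two facts yield
\[
\mathrm{Ric}^M(X,X)=\mathrm{Ric}^K(X,X)\qquad\text{for every }X\in T\F.
\]
Hence the transversal positivity of $\mathrm{Ric}^M$ is exactly the positivity of $\mathrm{Ric}^K$ on the compact K\"ahler manifold $(K,h)$.

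Next I would apply Kobayashi's theorem: a compact K\"ahler manifold with positive Ricci curvature is simply connected. This gives $\pi_1(K)=1$. Finally, from the mapping torus description $M\cong K_\psi$ of \thmref{thm:maptor}, the homotopy exact sequence of the fibre bundle $K\to M\to S^1$ collapses to
\[
\pi_1(M)\;\cong\;\pi_1(K)\rtimes_{\psi_\ast}\ZZ\;=\;\ZZ,
\]
which is the desired conclusion.

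The only non-routine step is the first one: identifying the transversal positivity of $\mathrm{Ric}^M$ with ordinary positivity of the K\"ahler Ricci tensor on $K$. That identification rests entirely on the parallelism of $\xi$ (so that the leaves of $\F$ are totally geodesic and the $\xi$-direction drops out of the Ricci trace). Once this geometric reduction is in place, the rest of the argument is purely topological and is delivered by Kobayashi's theorem together with Li's mapping torus presentation; the splitting results of Section~\ref{sec:crsplit} are not even needed here, although they give an alternative route via \thmref{thm:fundgp} and Myers' theorem (which would only show $\pi_1(M)$ is virtually $\ZZ$, so the mapping torus semidirect product description is the cleanest way to finish).
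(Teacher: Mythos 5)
Your proposal is correct and follows essentially the same route as the paper: transfer the transversal positivity to the K\"ahler fibre $K$, apply Kobayashi's theorem to get $\pi_1(K)=1$, and read $\pi_1(M)\cong\ZZ$ off Li's mapping torus presentation. The only difference is in the justification of the transfer step, where the paper argues loosely that the flow of the Killing field $\xi$ carries the leaves of $\ker\eta$ isometrically onto $K$, while you derive $\mathrm{Ric}^M|_{T\F}=\mathrm{Ric}^K$ from the parallelism of $\xi$ (totally geodesic leaves, the Gauss equation, and $R(\cdot,\cdot)\xi=0$) --- a more careful account of the same geometric fact.
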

Their result relies, in turn, on the following theorem of Kobayashi (\cite{K}).
\begin{thm}[{\cite[Theorem A]{K}}]\label{thm:Kob}
A compact K\"ahler manifold with positive definite Ricci tensor is simply connected.
\end{thm}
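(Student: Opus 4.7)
The plan is to combine three classical ingredients: Myers' theorem to reduce to a finite cover, a Bochner-type vanishing for holomorphic forms to compute the holomorphic Euler characteristic, and the multiplicativity of this Euler characteristic under finite \'etale covers to upgrade finiteness of $\pi_1$ to triviality.

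First, because the Ricci tensor of the compact manifold $M$ is positive definite, it is bounded below by a positive constant, so Myers' theorem implies that $\pi_1(M)$ is finite. Let $\pi\colon \widetilde M \to M$ denote the universal cover, a finite cover of degree $d = |\pi_1(M)|$. The pulled-back metric makes $\widetilde M$ a compact K\"ahler manifold whose Ricci tensor is again positive definite.

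Second, I claim that $h^{p,0}(X) = 0$ for all $p \geq 1$ on any compact K\"ahler manifold $X$ with positive Ricci. The quickest route is Kodaira vanishing: the Ricci form represents $2\pi c_1(X)$, so positivity of Ricci makes the anti-canonical bundle $-K_X$ a positive line bundle, whence $H^q(X,\mathcal O_X) = 0$ for $q > 0$, and Hodge symmetry $h^{q,0} = h^{0,q}$ on a compact K\"ahler manifold gives the claim. Alternatively, one can argue via the Weitzenb\"ock identity: for a holomorphic $(p,0)$-form $\alpha$, integration by parts yields
\[
0 \;=\; \int_X |\nabla \alpha|^2 \,+\, \int_X R(\alpha,\alpha),
\]
where $R(\alpha,\alpha)$ is a non-negative combination of Ricci eigenvalues, strictly positive wherever $\alpha \neq 0$. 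In either case $h^{p,0}(X) = 0$ for $p \geq 1$, so
\[
\chi(X,\mathcal O_X) \;=\; \sum_{p=0}^{n}(-1)^p h^{p,0}(X) \;=\; h^{0,0}(X) \;=\; 1,
\]
where $n = \dim_{\CC} X$.

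Third, since $\pi\colon \widetilde M \to M$ is a finite \'etale cover of degree $d$, the pushforward $\pi_* \mathcal O_{\widetilde M}$ is locally free of rank $d$ and $H^*(\widetilde M,\mathcal O_{\widetilde M}) = H^*(M,\pi_* \mathcal O_{\widetilde M})$, so the holomorphic Euler characteristic is multiplicative: $\chi(\widetilde M,\mathcal O_{\widetilde M}) = d \cdot \chi(M,\mathcal O_M)$. Applying the previous step to both $M$ and $\widetilde M$ gives $1 = d\cdot 1$, so $d = 1$ and $\pi_1(M)$ is trivial. The main obstacle is the holomorphic vanishing: the Weitzenb\"ock computation for general $(p,0)$-forms requires the K\"ahler identities to split the Dolbeault Laplacian and to identify the curvature term as a positive quadratic form in the Ricci eigenvalues, which is delicate for $p > 1$. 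The Kodaira-vanishing route bypasses this calculation at the cost of invoking a heavier theorem; either way the decisive point is that positive Ricci on a K\"ahler manifold forces the arithmetic genus to equal $1$, obstructing any nontrivial finite \'etale cover.
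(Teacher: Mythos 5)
The paper does not prove this statement; it quotes it as Theorem A of Kobayashi \cite{K} and uses it as a black box, so there is no internal argument to compare against. Your proof is correct, and it is in fact essentially Kobayashi's original 1961 argument: Myers' theorem gives finiteness of $\pi_1(M)$; the Bochner technique (or, equivalently, Kodaira vanishing for the positive anticanonical bundle plus Hodge symmetry) gives $h^{p,0}=0$ for $p\geq 1$ and hence $\chi(\mathcal{O})=1$ for any compact K\"ahler manifold of positive Ricci curvature; and multiplicativity of the arithmetic genus under a finite unramified cover forces the degree of the universal cover to be $1$. One small streamlining: the multiplicativity step is cleanest via Hirzebruch--Riemann--Roch, since $\chi(X,\mathcal{O}_X)=\int_X \mathrm{Td}(X)$ and the Todd class pulls back under the \'etale map, which avoids having to compute the Chern character of the pushforward sheaf $\pi_*\mathcal{O}_{\widetilde M}$; and since the pulled-back metric on $\widetilde M$ again has positive Ricci, the vanishing applies to both $M$ and $\widetilde M$ directly, exactly as you use it.
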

We now give an alternative proof of \thmref{thm:DM} from our viewpoint.

\begin{proof}
Let $(M^{2n+1},J,\xi,\eta,g)$ be a co-K\"ahler manifold and let $\F$ be the foliation 
given by $\ker(\eta)$. Assume that the Ricci curvature tensor is transversally positive.
Using Li's approach, we can pass to an integer co-K\"ahler structure and this 
process uses the flow of the Reeb vector field $\xi$ to deform the leaves of $\F$  
into the K\"ahler submanifold $K$. Now recall that $\xi$ is Killing on a co-K\"ahler 
manifold, so its flow consists of isometries of $M$. In particular, if $S$ is
transversally positive definite on $\F$, then $K$ is a K\"ahler manifold with positive 
definite Ricci tensor. By \thmref{thm:Kob}, $K$ is simply connected. Therefore $\pi_1(M)$
is the semi-direct product of the trivial group with $\ZZ$, hence isomorphic to $\ZZ$.
\end{proof}

\subsection{Co-K\"ahler manifolds with solvable fundamental group}\label{sec:solvgp}
There has been much work done in the past $20$ years regarding the question of
whether K\"ahler solvmanifolds are tori. In \cite{Has}, for instance, it is shown
that such a manifold is a finite quotient of a complex torus which is also the total
space of a complex torus bundle over a complex torus. In \cite{FV}, Hasegawa's
result was applied to show the following.

\begin{thm}
A solvmanifold has a co-K\"ahler structure if and only if it is a finite quotient
of torus which has a structure of a torus bundle over a complex torus. As a
consequence, a solvmanifold $M = G/\Gamma$ of completely solvable type has a
co-K\"ahler structure if and only if it is a torus.
\end{thm}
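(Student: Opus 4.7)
The plan is to combine Theorem 3.3 with Hasegawa's classification of compact K\"ahler solvmanifolds \cite{Has} to establish the main equivalence, and then invoke Mostow-type rigidity to obtain the completely solvable consequence.

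In the forward direction, let $M = G/\Gamma$ be a solvmanifold equipped with a co-K\"ahler structure. Theorem 3.3 supplies a finite $\ZZ_m$-cover $\theta\colon K \times S^1 \to M$ with $K$ a compact K\"ahler manifold, corresponding to a finite-index subgroup $\Gamma' \subseteq \Gamma$. Since $\Gamma'$ is still solvable, $K \times S^1 \cong G/\Gamma'$ is itself a solvmanifold. The $S^1$-factor of the cover acts via the closure in $\mathrm{Isom}(M,g)$ of the flow of the parallel Killing field $\xi$ (from Section 2), so it is generated by a one-parameter subgroup of $G$; its kernel $G_0 \subset G$ is a codimension-one, closed, simply connected solvable subgroup, and $K \cong G_0/(G_0 \cap \Gamma')$ thereby acquires the structure of a compact K\"ahler solvmanifold. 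Hasegawa's theorem now applies and exhibits $K$ as a finite quotient of a complex torus which is the total space of a holomorphic torus bundle over a complex torus. Taking the product with $S^1$ and descending through the $\ZZ_m$-action realizes $M$ as a finite quotient of $T^{2n+1}$ whose torus bundle structure over a complex torus base is inherited from that of $K$.

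For the converse, suppose $M$ is a finite quotient of a torus $T^{2n+1}$ that fibers as a torus bundle over a complex torus $B$. Pulling back the translation-invariant K\"ahler form from $B$ and coupling it with the closed 1-form $\eta$ and dual parallel Killing field $\xi$ coming from a circle direction transverse to the fiber produces a co-K\"ahler structure on $T^{2n+1}$ in the sense of Definition 1.1; after averaging over the finite group of deck transformations, this structure descends to $M$. For the completely solvable consequence, one invokes Hasegawa's rigidity theorem asserting that completely solvable solvmanifolds are classified up to diffeomorphism by the isomorphism type of the pair $(G,\Gamma)$. Having shown that a completely solvable co-K\"ahler $M = G/\Gamma$ is finitely covered by a torus, $\pi_1(M)$ contains a finite-index free abelian subgroup; together with complete solvability, this forces $G \cong \RR^n$, so $M$ is diffeomorphic to a torus.

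The main obstacle is the very first step in the forward direction: one must verify that the $S^1$-action delivered by Theorem 3.3 is not merely smooth but arises from a one-parameter Lie subgroup of the ambient solvable group $G$, so that the quotient $K$ is a genuine solvmanifold rather than merely a K\"ahler manifold with solvable fundamental group. The relevant input is that $\xi$ is a parallel Killing field whose flow, on a compact solvmanifold, closes up inside a torus of translations in $G$; after passing to the cover, the $S^1$-direction can be aligned with a one-parameter subgroup of $G$, legitimizing the application of Hasegawa's theorem to $K$.
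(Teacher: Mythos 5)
First, a point of orientation: the paper does not prove this statement at all --- it is quoted verbatim (with terminology adjusted) from Fino and Vezzoni \cite{FV}, who in turn rely on Hasegawa \cite{Has}. There is therefore no internal proof to compare yours against, and what follows is an assessment of your argument on its own terms.

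Your strategy (reduce to Hasegawa via the splitting theorem) is natural, but the decisive step of the forward direction has a genuine gap. To invoke Hasegawa you must know that $K$ is a K\"ahler \emph{solvmanifold}, and your justification --- that the $S^1$-action of Theorem~\ref{thm:cosympsplit} ``arises from a one-parameter subgroup of $G$'' whose ``kernel'' $G_0$ is a codimension-one closed subgroup with $K\cong G_0/(G_0\cap\Gamma')$ --- does not hold up. The metric $g$ and the parallel field $\xi$ are not left-invariant data on $G$, so the closure of the flow of $\xi$ in $\mathrm{Isom}(M,g)$ need not consist of translations by elements of $G$; moreover a one-parameter subgroup has no kernel, and the homomorphism $G\to\RR$ whose fibres would give $K$ is never produced. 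This is precisely the hard point, and the standard way around it (and, in essence, the route of \cite{FV}) is to bypass $K$ entirely: for $M$ co-K\"ahler the product $M\times S^1$ carries a K\"ahler structure, and it is manifestly a solvmanifold when $M$ is, so Hasegawa applies directly to $M\times S^1$, from which the structure of $M$ is read off. Your converse also has a soft spot: ``averaging over the finite group of deck transformations'' does not preserve an almost contact metric structure, since the identities $J^2=-I+\eta\otimes\xi$ and the normality condition are not linear in $(J,\xi,\eta,g)$; one must instead arrange from the outset that the deck group acts by Hermitian isometries of a fixed flat structure. The completely solvable consequence is essentially correct once the first part is secured (Saito--Mostow rigidity plus the fact that a torsion-free lattice with a finite-index abelian subgroup in a completely solvable group forces $G\cong\RR^{2n+1}$); note also that the paper's own Theorem~\ref{thm:kahlersolv} already yields ``finite quotient of a torus'' for aspherical co-K\"ahler manifolds with solvable fundamental group by a different route (Baues--Cort\'es).
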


Note that we have changed the terminology of \cite{FV} to match ours. We can
use \thmref{thm:cosympsplit} to contribute something in this vein.

\begin{thm}\label{thm:kahlersolv}
Let $(M^{2n+1},J,\xi,\eta,g)$ be an aspherical co-K\"ahler manifold with integral
structure and suppose $\pi_1(M)$ is a solvable group. Then $M$ is a finite quotient
of a torus.
\end{thm}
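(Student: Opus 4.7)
The plan is to use \thmref{thm:cosympsplit} to reduce the problem to the K\"ahler setting and then appeal to classical structure results for K\"ahler groups.

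First, \thmref{thm:cosympsplit} provides a finite cyclic cover $\widetilde M = K \times S^1 \to M$ with $K$ compact K\"ahler. Asphericity of $M$ passes to the finite cover $\widetilde M$ (same universal cover), and then to the factor $K$, since $S^1$ is already aspherical. The subgroup $\pi_1(K)\times\ZZ = \pi_1(\widetilde M) \subseteq \pi_1(M)$ has finite index, so it is solvable, and hence $\pi_1(K)$ is solvable.

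Next, I would invoke the theorem of Arapura--Nori (later sharpened by Delzant) that a solvable K\"ahler group is virtually nilpotent. Passing to the corresponding finite K\"ahler cover $K' \to K$, we may assume $\pi_1(K')$ is torsion-free nilpotent, while $K'$ stays compact, K\"ahler and aspherical. The extension of the Benson--Gordon theorem to aspherical K\"ahler manifolds with nilpotent fundamental group then forces $\pi_1(K')$ to be abelian; combined with the formality of compact K\"ahler manifolds and Catanese's rigidity theorem for K\"ahler tori, this identifies $K'$ with a complex torus $T^{2n}$. Stacking the covers, $T^{2n}\times S^1 \cong T^{2n+1}$ is a finite cover of $M$, so $M$ is a finite quotient of a torus, as required.

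The main obstacle is the passage from ``nilpotent fundamental group'' to ``diffeomorphic to a torus up to finite cover''. Arapura--Nori--Delzant delivers virtual nilpotence of $\pi_1(K)$ essentially for free, but moving to an honest torus requires deeper K\"ahler structure theorems, and one must carefully distinguish mere homotopy equivalence with a torus from genuine diffeomorphism (or biholomorphism). A cleaner alternative, which bypasses Catanese entirely, would be to observe that the aspherical K\"ahler manifold $K'$ with polycyclic fundamental group is (by Auslander--Mostow) a K\"ahler solvmanifold, and then apply Hasegawa's classification directly to identify $K'$ with a finite quotient of a complex torus.
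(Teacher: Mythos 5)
Your main line of argument is correct and is structurally the same as the paper's proof: both use \thmref{thm:cosympsplit} to produce the finite cover $K\times S^1\to M$, transfer asphericity and solvability of the fundamental group to the K\"ahler fibre $K$, show $K$ is finitely covered by a torus, and then stack the covers. The only real difference is how the K\"ahler step is discharged. The paper cites Baues--Cort\'es \cite{BC} in one line (their result already says that an aspherical compact K\"ahler manifold with solvable fundamental group is, up to finite cover, a complex torus), whereas you reassemble that theorem from its ingredients: Arapura--Nori/Delzant for virtual nilpotence, formality plus the Benson--Gordon/Hasegawa nilmanifold argument to force the nilpotent group to be abelian, and Catanese's rigidity theorem to upgrade ``homotopy equivalent to a torus'' to ``biholomorphic to a torus.'' That chain is valid (the formality argument applies because the aspherical $K'$ has the rational homotopy type of the nilmanifold on $\pi_1(K')$, and your attention to the homotopy-equivalence-versus-diffeomorphism issue is exactly the right concern), but it is considerably heavier than the single citation the paper uses. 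One caution about your proposed ``cleaner alternative'': asphericity with polycyclic fundamental group gives only a \emph{homotopy equivalence} between $K'$ and a solvmanifold, not a solvmanifold structure on $K'$ compatible with its K\"ahler metric, so Hasegawa's classification \cite{Has} of K\"ahler \emph{solvmanifolds} does not apply directly there; that route is actually the least rigorous of the ones you sketch, not the cleanest.
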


\begin{proof}
We know that every aspherical solvable K\"ahler group contains a finitely generated
abelian subgroup of finite index (see \cite[section 1.5]{BC} for instance). Now,
if $M=K_\varphi$ is the Li mapping torus description of $M$, we see that $K$ is
K\"ahler and aspherical with solvable fundamental group (as a subgroup of
$\pi_1(M)$). Hence, $K$ is finitely covered by a torus. By \thmref{thm:cosympsplit},
there is a finite ${\mathbb Z_m}$-cover $K \times S^1 \to M$ and this then
displays $M$ itself as a finite quotient of a torus.
\end{proof}

\section{Automorphisms of K\"ahler manifolds}\label{sec:maptor}
In this section, we connect our results above with certain facts about
compact K\"ahler manifolds and their automorphisms. In order to do this, we first need
some general results about mapping tori.
Let $M$ be a smooth manifold and let $\varphi\colon M\to M$ be a diffeomorphism.
Let $M_{\varphi}$ denote the mapping torus of $\varphi$. We have the following result.

\begin{prop}\label{prop:mappingtorustrivial}
The mapping torus $M_{\varphi}$ is trivial as a bundle over $S^1$
(i.e. $M_{\varphi}\cong M\times S^1$ over $S^1$) if and only if
$\varphi\in\mathrm{Diff}_0(M)$, where $\mathrm{Diff}_0(M)$ denotes
the connected component of the identity of the group $\mathrm{Diff}(M)$.
\end{prop}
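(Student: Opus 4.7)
The plan is to pass to the universal cover $\RR\to S^1$ of the base $S^1$ and to build an explicit bundle isomorphism in one direction and extract an isotopy in the other. As a setup, I would write
$$
M_\varphi=(M\times\RR)/\ZZ,\qquad n\cdot(x,t)=(\varphi^n(x),t+n),
$$
and $M\times S^1=(M\times\RR)/\ZZ$ with the trivial $\ZZ$-action $n\cdot(x,t)=(x,t+n)$; both project to $S^1=\RR/\ZZ$ via the $t$-coordinate.

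For the ``if'' direction, assuming $\varphi\in\mathrm{Diff}_0(M)$, pick a smooth isotopy $\varphi_t$, $t\in[0,1]$, with $\varphi_0=\id$ and $\varphi_1=\varphi$, and define
$$
\Phi\colon M\times[0,1]\to M\times[0,1],\qquad \Phi(x,t)=(\varphi_t(x),t).
$$
This is a fibre-preserving diffeomorphism with $\Phi(x,0)=(x,0)$ and $\Phi(x,1)=(\varphi(x),1)$, so it carries the identification $(x,0)\sim(x,1)$ defining $M\times S^1$ to the identification $(x,0)\sim(\varphi(x),1)$ defining $M_\varphi$. Hence $\Phi$ descends to a bundle isomorphism $M\times S^1\to M_\varphi$ over $S^1$.

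For the ``only if'' direction, given a bundle isomorphism $F\colon M\times S^1\to M_\varphi$ over $S^1$, I would pull $F$ back along $\RR\to S^1$ to obtain a bundle isomorphism $\tilde F\colon M\times\RR\to M\times\RR$ over $\RR$ (both pullbacks are trivial $M$-bundles over the contractible base $\RR$). Since $\tilde F$ preserves the $\RR$-coordinate, it has the form
$$
\tilde F(x,t)=(f_t(x),t)
$$
for a smooth family $f_t\in\mathrm{Diff}(M)$. The lifted deck translations are $(x,t)\mapsto(x,t+1)$ on the domain and $(y,t)\mapsto(\varphi(y),t+1)$ on the codomain, so $\ZZ$-equivariance of $\tilde F$ forces $f_{t+1}=\varphi\circ f_t$ for all $t\in\RR$. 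In particular $\varphi=f_1\circ f_0^{-1}$, and $s\mapsto f_s\circ f_0^{-1}$, $s\in[0,1]$, is a smooth isotopy in $\mathrm{Diff}(M)$ from $\id$ to $\varphi$, giving $\varphi\in\mathrm{Diff}_0(M)$.

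There is no substantive obstacle; the only points to verify are the smoothness of the family $f_t$ (immediate from smoothness of $\tilde F$) and that $\tilde F$ really has the diagonal form above (immediate from fibre-preservation over $\RR$). Conceptually, the statement is the classifying-space observation that a smooth $M$-bundle over $S^1$ with structure group $\mathrm{Diff}(M)$ is classified by its monodromy in $\pi_0(\mathrm{Diff}(M))=\mathrm{Diff}(M)/\mathrm{Diff}_0(M)$, and this class is trivial precisely when the bundle is trivial; the argument above is a hands-on incarnation of this fact.
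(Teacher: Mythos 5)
Your proof is correct and follows essentially the same route as the paper's: in one direction an isotopy $\varphi_t$ from $\id$ to $\varphi$ is used fibrewise to intertwine the two gluing identifications, and in the other a fibre-preserving trivialization is read off as a path $f_t$ of diffeomorphisms whose endpoints differ by $\varphi$, yielding the isotopy $f_s\circ f_0^{-1}$. Your packaging via the universal cover $\RR\to S^1$ and $\ZZ$-equivariance is only a cosmetic variant of the paper's direct computation with $f([x,t])=(g_t(x),t)$ and the relation $g_0=g_1\circ\varphi$.
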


\begin{proof}
First assume the mapping torus is trivial over $S^1$. We have the following
commutative diagram with top row a diffeomorphism.

$$\xymatrix{
M_{\varphi} \ar[dr]_-p \ar[rr]^-f & & M \times S^1 \ar[dl]^-{\mathrm{pr}_2} \\
& S^1 &           }
$$
where $\mathrm{pr}_2(f([x,t]))=[t]=p([x,t])$. This means that $f$ maps level-wise, so we
have $f([x,t])=(g_t(x),t)$, where each $g_t\colon M \to M$ is a diffeomorphism.
The mapping torus relation $(k,0) \sim (\varphi(k),1)$ gives
$$(g_0(x),[0])=f([x,0])=f(\varphi(x),1)=(g_1(\varphi(x)),[1])=(g_1(\varphi(x)),[0]),$$
and then we have $g_0(x)=g_1(\varphi(x))$.

Define an isotopy $F\colon M \times I \to M$ by $F(x,t)=g_0^{-1} g_t(\varphi(x))$.
Then $F(x,0)=g_0^{-1} g_0(\varphi(x))=\varphi(x)$ and $F(x,1)=
g_0^{-1} g_1(\varphi(x))=g_0^{-1} g_0(x)=x$.
Hence, $\varphi$ is isotopic to the identity.

Conversely, suppose that $\varphi\in \mathrm{Diff}_0(M)$. Then there exists
a smooth map $H\colon M\times [0,1]\to M$ such that
$$
H(m,0)=m \qquad \textrm{and} \qquad H(m,1)=\varphi(m)
$$
and $H(\cdot,t)$ is a diffeomorphism for all $t\in[0,1]$; in particular, for all
$t\in[0,1]$, there exists a diffeomorphism $H^{-1}(\cdot,t)$. Define a map
$f\colon M\times S^1\to M_{\varphi}$ by
$$
f(m,[t])=[H(m,t),t];
$$
where we identify $M\times S^1=\frac{M\times[0,1]}{(m,0)\sim (m,1)}$.
It is enough to check that $f$ is well defined, as it is clearly smooth, but
this is guaranteed by our definition of $H$. Next we define an inverse
$g\colon M_{\varphi}\to M\times S^1$ by setting
$$
g([m,t])=(H^{-1}(m,t),[t]).
$$
Again, $g$ is smooth, and we must prove that it is well defined. Indeed, we have
$$
g([m,0])=(H^{-1}(m,0),[0])=(m,[0])
$$
and
$$
g([\varphi(m),1])=(H^{-1}(\varphi(m),1),[1])=(\varphi^{-1}(\varphi(m)),[1])=(m,[1]).
$$
But $[m,[0]]=[m,[1]]$ in $M \times S^1$, so $g$ is well-defined and is an inverse for $f$.
\end{proof}

\begin{rem}
For reference, we make the simple observation that, for a diffeomorphism
$\varphi\in\mathrm{Diff}_0(M)$, which is isotopic to the identity, the induced map
on cohomology $\varphi^*\colon H^*(M;\ZZ)\to H^*(M;\ZZ)$ is the identity map.
\end{rem}

The proposition suggests that, in order to obtain non-trivial examples of mapping tori,
one should consider diffeomorphisms that do not belong to the identity component of
the group of diffeomorphisms. It is then interesting to look at the groups
$\mathrm{Diff}(M)/\mathrm{Diff}_0(M)$ or $\mathrm{Diff}_+(M)/\mathrm{Diff}_0(M)$,
the latter in case one is interested in orientation-preserving diffeomorphisms.

\begin{rem}
In case $M$ is a compact complex manifold, one can replace $\mathrm{Diff}(M)$ by
the group $\mathrm{Aut}(M)$ of holomorphic diffeomorphisms of $M$. Further,
when $M$ is compact K\"ahler, one may consider the subgroup $\mathrm{Aut}_{\omega}(M)$
of elements which preserve the K\"ahler class (but not necessarily the K\"ahler form). 
In each case, the corresponding mapping torus 
is trivial if and only if the automorphism belongs to the identity component.
\end{rem}

Now let's consider the structure group of a mapping torus.
Let $M$ be a smooth manifold and let $\varphi\colon M\to M$ be a diffeomorphism.
Then the mapping torus $M_{\varphi}$ is a fibre bundle over $S^1$ with fibre $M$.
In general, the structure group of a fibre bundle $F\to E\to B$ is a subgroup $G$
of the homeomorphism group of $F$ such that the transition functions of the bundle
take values in $G$.

\begin{prop}\label{mapping:torus}
The structure group $G$ of a mapping torus $M_{\varphi}$ is the cyclic group $\langle \varphi\rangle\subset\mathrm{Diff}(M)$.
\end{prop}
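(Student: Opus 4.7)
The plan is to exhibit an explicit two-chart trivialising atlas for the fibre bundle $M_\varphi \to S^1$ whose transition cocycle takes values in $\langle\varphi\rangle\subset\mathrm{Diff}(M)$, and to verify via monodromy that no proper subgroup suffices.

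First I would view $S^1=\RR/\ZZ$ and cover it by the two open arcs $U_1=\{[t]:0<t<1\}$ and $U_2=\{[t]:-\tfrac12<t<\tfrac12\}$, so that $U_1\cap U_2$ decomposes into two connected components, namely $A=\{[t]:0<t<\tfrac12\}$ and $B=\{[t]:\tfrac12<t<1\}$. On $p^{-1}(U_1)$ set $\phi_1([x,t])=([t],x)$, which is well defined because no identification occurs on the open interval $(0,1)$. On $p^{-1}(U_2)$ define $\phi_2([x,t])=([t],x)$ for $t\in(0,\tfrac12)$, and use the mapping torus relation $[x,t]=[\varphi^{-1}(x),t-1]$ to set $\phi_2([x,t])=([t-1],\varphi^{-1}(x))$ for $t\in(\tfrac12,1)$.

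Next I would compute the transition $\phi_2\circ\phi_1^{-1}([t],x)=([t],g_{21}([t])(x))$ on the two components: on $A$ one obtains $g_{21}=\mathrm{id}$, while on $B$ one obtains $g_{21}=\varphi^{-1}$. Hence the cocycle takes values in the subgroup of $\mathrm{Diff}(M)$ generated by $\{\mathrm{id},\varphi^{-1}\}$, which is precisely $\langle\varphi\rangle$, so the structure group can be reduced to $\langle\varphi\rangle$.

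For the converse, I would invoke the universal cover $\RR\to S^1$ with deck group $\ZZ$: the pullback of $M_\varphi$ along $\RR\to S^1$ is canonically trivial, and a generator of $\ZZ$ acts by $(t,x)\mapsto(t+1,\varphi(x))$. The induced monodromy representation $\ZZ\cong\pi_1(S^1)\to\mathrm{Diff}(M)$ sends $1\mapsto\varphi$, with image $\langle\varphi\rangle$; since any structure group must contain the monodromy image, the reduction to $\langle\varphi\rangle$ is sharp. The only real obstacle is the sign/convention bookkeeping when writing down $\phi_2$ across the glue point; once the identification $(x,t)\sim(\varphi(x),t+1)$ on $M\times\RR$ is fixed, the rest is a direct unravelling of the definition of the mapping torus.
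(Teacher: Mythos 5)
Your proposal is correct and follows essentially the same route as the paper's sketch: a two-chart cover of $S^1$ whose intersection has two components, with transition function the identity on one component and (a power of) $\varphi$ on the other, so the cocycle lands in $\langle\varphi\rangle$. Your added monodromy argument showing the reduction is sharp is a sensible supplement but does not change the underlying approach.
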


\begin{proof}[Sketch of Proof {\rm (see {\cite[Section 18]{St}})}]
The mapping torus $M_{\varphi}$ is a fibre bundle over $S^1$ with fiber the
manifold $M$. We can cover $S^1$ by two open sets $U,V$ such that $U\cap V=\{U_0,U_1\}$
consists of two disjoint open sets. Then $M_{\varphi}\big|_U=M\times U$ and
$M_{\varphi}\big|_V=M\times V$, and the mapping torus is trivial over $U$ and $V$.
To describe $M_{\varphi}$ it is sufficient to give the transition function
$g\colon U\cap V\to\mathrm{Diff}(M)$. We can assume that $g$ is the identity on
$U_0$ and $g=\varphi$ on $U_1$. Then $\varphi$ generates $G$.

\end{proof}

\begin{rem}
Another way to describe the mapping torus of a diffeomorphism
$\varphi\colon M\to M$ is as the quotient of $M\times\RR$ by the group
$\ZZ$ acting on $M\times\RR$ by
$$
(m,(p,t))\mapsto (\varphi^m(p),t-m).
$$
It is then clear that the structure group of $M_{\varphi}$ is isomorphic to the
group generated by $\varphi$.
\end{rem}

Let $(K,h,\omega)$ be a compact K\"ahler manifold, where $h$ denotes the Hermitian
metric and $\omega$ is the K\"ahler form. A \emph{Hermitian isometry} is a holomorphic 
map $\varphi\colon K\to K$ such that $\varphi^*h=h$, where $h$ is the Hermitian metric of 
$K$. Note that $\varphi$ preserves both the Riemannian metric and the symplectic form 
associated to $h$. Let $\mathrm{Isom}(K,h)\subseteq\mathrm{Aut}(K)$
denote the group of Hermitian isometries of $K$ and let $\psi\in\mathrm{Isom}(K,h)$.
Then $\psi$ is a holomorphic diffeomorphism of $K$ which preserves the
Hermitian metric $h$. In particular, $\psi^*\omega=\omega$. Li's theorem \cite{Li}
says that the mapping torus of $\psi$, denoted by $K_{\psi}$ is a compact co-K\"ahler 
manifold and, conversely, compact co-K\"ahler manifolds are always such mapping tori.
We say that a mapping torus is a \emph{K\"ahler mapping torus} if it is a mapping torus
$K_\varphi$ of a Hermitian isometry $\varphi\colon K\to K$ of a K\"ahler manifold $K$.
If $K_{\psi}$ is non-trivial, then according to \propref{prop:mappingtorustrivial},
$\psi$ defines a non-zero element in
$$
H:=\mathrm{Isom}(K,h)/\mathrm{Isom}_0(K,h).
$$
Our results prove that, up to a finite covering, $K_{\psi}\cong K\times_{\ZZ_m} S^1$
(\thmref{thm:cosympsplit}), and the $\ZZ_m$ action is by translations on the $S^1$
factor. Furthermore, we get a fibre bundle $K_{\psi}\to S^1$ with structure group
the finite group $\ZZ_m$. Notice that when we display $K_{\psi}$ as a fibre bundle
with fibre $K$, the structure group of this bundle is $\langle\psi\rangle$,
the cyclic group generated by $\psi$ in $H$. We then have the following theorem.

\begin{thm}
If $K$ is a K\"ahler manifold, then all elements of the group $H$ have finite order.
\end{thm}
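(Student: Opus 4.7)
The plan is to apply the splitting theorem \thmref{thm:cosympsplit} to the co-K\"ahler mapping torus of an arbitrary element of $\mathrm{Isom}(K,h)$, and to read off finite order from the resulting reduction of structure group.

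Given a representative $\psi \in \mathrm{Isom}(K,h)$ of a class $[\psi] \in H$, I would first invoke the converse direction of Li's theorem (recorded in the paragraph before the statement) to endow the mapping torus $K_\psi$ with a compact co-K\"ahler structure whose integral $1$-form coincides with the bundle projection $p \colon K_\psi \to S^1$. Then \thmref{thm:cosympsplit} produces an integer $m \geq 1$ and a finite $\ZZ_m$-cover $K \times S^1 \to K_\psi$, whose deck transformations act diagonally by translation on the $S^1$ factor and by some $\rho \colon K \to K$ satisfying $\rho^m = \id$. Rephrased via \thmref{thm:coverthm}, this is precisely a reduction of the structure group of $p$ to the finite cyclic group $\langle \rho \rangle \cong \ZZ_m$.

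Next, I would pair this with \propref{mapping:torus}, which identifies the structure group of the mapping-torus bundle $p$ with the cyclic group generated by $\psi$. As emphasised in the paragraph preceding the statement, this structure group is naturally a subgroup of $H$: two Hermitian isometries with the same class in $H$ give bundle-isomorphic mapping tori, essentially by \propref{prop:mappingtorustrivial} applied to $\psi \circ \psi'^{-1}$. Comparing the two descriptions of the structure group forces $\langle \psi \rangle \cong \ZZ_m$ inside $H$, so $[\psi]^m = 1$ and $[\psi]$ has finite order.

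The hard part will be the final identification inside $H$: one must confirm that the diffeomorphism $\rho$ supplied by \thmref{thm:cosympsplit} is genuinely a Hermitian isometry, and that the isotopy relating $\psi$ to $\rho$ runs through Hermitian isometries rather than only through general diffeomorphisms. Both points should follow from inspecting the proof of \thmref{thm:coverthm}: the $\ZZ_m$-action is a subgroup of the $S^1$-action on $K_\psi$ generated by the flow of Li's parallel, integral Killing vector field $\xi_\theta$ (see \lemref{lem:parallel}), and this flow preserves the entire co-K\"ahler structure $(J_\theta,\xi_\theta,\eta_\theta,g_\theta)$, hence restricts to an action on the fibre $K$ by Hermitian isometries. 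If this technical comparison proves more delicate than expected, a much shorter route is to invoke Myers--Steenrod: $\mathrm{Isom}(K,h)$ is a compact Lie group, so $H$ has finitely many elements and every element is automatically of finite order.
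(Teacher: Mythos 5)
Your main line of argument is precisely the paper's proof: form the co-K\"ahler mapping torus $K_\psi$ via the converse direction of Li's theorem, use \thmref{thm:cosympsplit} to reduce the structure group of $K_\psi\to S^1$ to a finite cyclic group $\ZZ_m$, and compare with \propref{mapping:torus}, which identifies that structure group with $\langle\psi\rangle$. The technical worry you flag --- that the comparison must take place in $H=\mathrm{Isom}(K,h)/\mathrm{Isom}_0(K,h)$ rather than merely in $\mathrm{Diff}(K)/\mathrm{Diff}_0(K)$ --- is legitimate and is glossed over in the paper's two-line proof; your proposed resolution (the $\ZZ_m$-action is the restriction of the $S^1$-action generated by the parallel Killing field, which preserves the whole co-K\"ahler structure and hence acts on the fibre by Hermitian isometries) is the right way to close it. Your fallback via Myers--Steenrod is a genuinely different and considerably more elementary route that the paper does not take: since $K$ is compact, $\mathrm{Isom}(K,h)$ is a closed subgroup of the compact Lie group of Riemannian isometries, hence itself a compact Lie group with finitely many components, so $H$ is finite and every element trivially has finite order. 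What the paper's longer argument buys is a demonstration that the splitting machinery of \thmref{thm:cosympsplit} recovers this finiteness on its own; what the compactness argument does \emph{not} give is Lieberman's stronger statement for $\mathrm{Aut}_\omega(K)/\mathrm{Aut}_0(K)$, where the relevant group is no longer compact and the finiteness is genuinely hard.
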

\begin{proof}
Pick an element $\psi\in H$ and form the mapping torus $K_{\psi}$. The discussion above
proves that $\psi$ has finite order in $H$. Since $\psi$ is arbitrary, the result follows.
\end{proof}

Indeed, Lieberman \cite{Lie} proves a much more general result, but in a much harder way.

\begin{thm}[{\cite[Proposition 2.2]{Lie}}]
Let $K$ be a K\"ahler manifold and let $\mathrm{Aut}_{\omega}(K)$ denote the group
of automorphisms of $K$ preserving a K\"ahler class (but not necessarily the K\"ahler form).
Let $\mathrm{Aut}_0(K)$ be the identity component. Then the quotient
$$\mathrm{Aut}_{\omega}(K)/\mathrm{Aut}_0(K)$$
is a finite group.
\end{thm}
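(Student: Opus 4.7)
The plan is to factor the quotient $\mathrm{Aut}_{\omega}(K)/\mathrm{Aut}_0(K)$ through the natural representation on cohomology, and then show finiteness of the image and of the kernel-quotient separately. Consider the homomorphism
$$\rho\colon\mathrm{Aut}_{\omega}(K) \longrightarrow \mathrm{GL}(H^*(K;\ZZ)).$$
Since $\mathrm{Aut}_0(K)$ is connected and acts on the discrete lattice $H^*(K;\ZZ)$, we have $\mathrm{Aut}_0(K)\subseteq \ker(\rho)$, so there is a short exact sequence
$$1 \to \ker(\rho)/\mathrm{Aut}_0(K) \to \mathrm{Aut}_{\omega}(K)/\mathrm{Aut}_0(K) \to \rho(\mathrm{Aut}_{\omega}(K)) \to 1,$$
and it suffices to prove finiteness of each outer term.

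For the image, I would use the Lefschetz decomposition of $H^*(K;\RR)$ with respect to $[\omega]$ together with the Hodge-Riemann bilinear relations to construct a symmetric bilinear form $Q$ on $H^*(K;\RR)$ which, after the standard sign twist on each bidegree and primitive component, is positive-definite. Each element of $\mathrm{Aut}_{\omega}(K)$ is a holomorphic diffeomorphism fixing $[\omega]$, so it preserves both the Hodge decomposition and the Lefschetz decomposition, and therefore preserves $Q$. Hence $\rho(\mathrm{Aut}_{\omega}(K))$ lies in the compact group $O(Q)$; being also contained in the discrete group $\mathrm{GL}(H^*(K;\ZZ))$, it must be finite.

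For the kernel, I would invoke the Bochner-Montgomery theorem, by which $\mathrm{Aut}(K)$ is a finite-dimensional complex Lie group with identity component $\mathrm{Aut}_0(K)$. The closed subgroup $\ker(\rho)$ then has identity component equal to $\mathrm{Aut}_0(K)$, since any connected subgroup of $\mathrm{Aut}(K)$ through the identity lies in $\mathrm{Aut}_0(K)$, so $\ker(\rho)/\mathrm{Aut}_0(K)$ is the discrete component group of $\ker(\rho)$. To force this component group to be finite, I would exhibit $\ker(\rho)$ as a subgroup of a compact Lie group: fix a Kähler metric $g$ on $K$ representing $[\omega]$ and observe that for $\varphi\in\ker(\rho)$ the pullback $\varphi^*g$ is a Kähler metric in the same class, so an averaging argument (or a harmonic-representative argument) places $\ker(\rho)$ inside the isometry group of a single canonical metric, which is compact by Myers-Steenrod.

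The main obstacle is the second step: finiteness of $\ker(\rho)/\mathrm{Aut}_0(K)$. The finite-dimensionality of the Lie algebra of $\mathrm{Aut}(K)$ does not by itself control the discrete component group, so one needs extra geometric input — either rigidity of harmonic representatives or Fujiki's observation that the relevant subgroup is almost algebraic. The first step, by contrast, is a careful but classical bookkeeping exercise with signs in the Hodge and Lefschetz decompositions, using crucially that the full Hodge structure is preserved by every element of $\mathrm{Aut}_{\omega}(K)$.
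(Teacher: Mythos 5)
You should first be aware that the paper does not prove this statement at all: it is quoted verbatim from Lieberman, precisely because the authors' own machinery only yields the much weaker fact that each individual element of $\mathrm{Isom}(K,h)/\mathrm{Isom}_0(K,h)$ has finite order. So your attempt has to stand on its own. Your overall reduction is the standard one, and the first half is essentially correct: $\mathrm{Aut}_0(K)$ lies in $\ker(\rho)$ and equals its identity component; elements of $\mathrm{Aut}_{\omega}(K)$ are holomorphic, fix $[\omega]$ (hence the Lefschetz operator) and the fundamental class, so they preserve the Weil-operator twist of the Hodge--Riemann pairing, which is positive definite; and a subgroup of $\mathrm{GL}(H^*(K;\ZZ)/\mathrm{tors})$ preserving a positive definite form is finite, being discrete in a compact group. (You need $K$ compact throughout --- the statement omits the hypothesis, but Hodge theory requires it.)

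The genuine gap is the kernel step, and the specific repair you sketch cannot work. The group $\ker(\rho)$ contains $\mathrm{Aut}_0(K)$, which is in general non-compact --- for $K=\cp^n$ it is $\mathrm{PGL}(n+1,\CC)$ --- so $\ker(\rho)$ cannot sit inside the isometry group of any Riemannian metric on $K$: isometry groups of compact manifolds are compact by Myers--Steenrod, the very theorem you invoke. For the same reason there is no averaging of $\varphi^*g$ over $\ker(\rho)$ without already knowing the compactness you are trying to establish, and a K\"ahler class has no canonical metric representative in general. The input that actually closes this step in Lieberman's proof is a cycle-space compactness argument: for $\varphi\in\mathrm{Aut}_{\omega}(K)$ the graph $\Gamma_\varphi\subset K\times K$ has volume $\int_K(\omega+\varphi^*\omega)^n$, which depends only on $[\varphi^*\omega]=[\omega]$ and is therefore constant on $\mathrm{Aut}_{\omega}(K)$; Bishop's theorem then confines all these graphs to a compact subset of the space of cycles of $K\times K$, so $\mathrm{Aut}_{\omega}(K)$ meets only finitely many components of $\mathrm{Aut}(K)$. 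This (or Fujiki's equivalent "almost algebraic" argument) is not optional extra polish --- it is the content of the theorem, and your proposal, which candidly labels the step "the main obstacle," does not supply it.
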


\begin{rem}
In \cite{Li}, Li also shows that the almost cosymplectic manifolds of \cite{CDM}
arise as symplectic mapping tori. That is, if $M$ is almost cosymplectic in the
terminology of \cite{CDM}, then there is a symplectic manifold $S$ and a symplectomorphism
$\varphi\colon S \to S$ such that $M \cong S_\varphi$. Li calls these manifolds
\emph{co-symplectic}. By the discussion in \secref{sec:crsplit} and the results above,
we see that there is a version of \thmref{thm:cosympsplit} for Li's co-symplectic manifolds
\emph{when the defining symplectomorphism $\varphi$ is of finite order in}
$${\rm Symp}(S)/{\rm Symp}_0(S).$$
Thus, knowledge about when this can happen would be very interesting.
\end{rem}

In general, one can not expect a non-zero element in ${\rm Symp}(S)/{\rm Symp}_0(S)$ to 
have finite order. As an example, consider the torus $T^2$ with the standard symplectic 
structure and let $\varphi\colon T^2\to T^2$ be the diffeomorphism covered by the linear 
transformation $A\colon \RR^2\to\RR^2$ with matrix
$$
A= \begin{pmatrix}
2 & 1 \\
1 & 1
\end{pmatrix}
$$
Then $\varphi$ is an area-preserving diffeomorphism of $T^2$, hence a symplectomorphism. 
Notice that the action of $\varphi$ on $H^1(T^2;\RR)$, which is represented by the matrix $A$, 
is nontrivial. Hence the symplectic mapping torus $T^2_\varphi$ is not diffeomorphic to 
$T^3=T^2\times S^1$; according to \propref{prop:mappingtorustrivial}, $\varphi$ is non-zero 
in ${\rm Symp}(T^2)/{\rm Symp}_0(T^2)$. Clearly $\varphi$ has infinite order.

\section{Examples}\label{sec:cdmexample}

The first example of a compact co-K\"ahler manifold that is not homeomorphic to the 
global product of a K\"ahler manifold and $S^1$ was given in \cite{CDM}.
This example was generalized to every odd dimension in \cite{MP}. Each of these examples 
is a \emph{solvmanifold} (i.e. a compact quotient of a solvable Lie group by a lattice) and
can be described as a mapping torus of a suitable Hermitian isometry of the torus $T^{2n}$. 
Although the examples were constructed in every dimension $2n+1$, it was
not clear whether they could be the product of some compact K\"ahler manifold of 
dimension $2n$ and a circle. Of course, from what we have said above, they \emph{are} 
products \emph{up to a finite cover}.

In this section we analyze these examples from both Li's mapping torus and our finite cover
splitting points of view. We also show that these examples are never the global product 
of a compact K\"ahler manifold and a circle, thus producing,
in every odd dimension, examples of compact co-K\"ahler manifolds that are not products.

Let us begin with the CDM example. Consider the  matrix
$$A= \begin{pmatrix}
0 & 1 \\
-1 & 0
\end{pmatrix}
$$
in $GL(\ZZ,2)$ and note that it defines a K\"ahler isometry of $T^2$ which
we can write as $A(x,y)=(y,-x)$. Li's approach says to form the mapping
torus
$$T^2_A =\frac{T^2 \times [0,1]}{(x,y,0)\sim (A(x,y),1)},$$
and then $T^2_A$ is a co-K\"ahler manifold with associated fibre bundle
$T^2 \to T^2_A \to S^1$ given by the projection
$$[x,y,t] \mapsto [t].$$
Now, $A$ has order $4$, so the picture is quite simple: namely, a central circle
winds around the mapping torus $4$ times before closing up. Therefore,
we see that we have a circle action on $T^2_A$ given by
$$ S^1 \times T^2_A \to T^2_A, \qquad ([s],[x,y,t])\mapsto [x,y,t+4s].$$
When the orbit map $S^1 \to T^2_A$, $[s] \mapsto [x_0,y_0,0]$ is composed
with the projection map $T^2_A \to S^1$, we get
$$S^1 \to S^1, \qquad [s] \mapsto [4s]$$
which induces multiplication by $4$ on $H_1(S^1;\ZZ)$. Hence, the $S^1$-action
is homologically injective and \thmref{thm:cosympsplit} then gives a
finite cover of $T^2_A$ of the form $T^2 \times S^1$.
Hence, $T^2_A$ is finitely covered by a torus. Now let's
look at the Betti numbers of $T^2_A$ using \thmref{thm:betti}.

The diffeomorphism $A$ acts on $H^1(T^2;\RR)$ by the matrix $P_*=A^t$, $P_*(x,y)=(-y,x)$, 
and on $H^2(T^2;\RR)$ by the identity; hence the K\"ahler
class is invariant (as we know in general). Otherwise, there are no
invariant classes in degrees greater than zero. To see this, suppose
$P_*(ax+by)=-ay+bx=ax+by$. Thus, $a=b$ and $a=-b$, so $a=b=0$. Now we have the
following.
\vskip5pt
\begin{itemize}
\item $b_1(T^2_A) = \overline b_1(T^2) +1 = 0+1 = 1$;
\vskip5pt
\item  $b_2(T^2_A) = \overline b_2(T^2) + \overline b_1(T^2) = 1+0 = 1$;
\vskip5pt
\item  $b_3(T^2_A) = \overline b_3(T^2) + \overline b_2(T^2) = 0+1 = 1$.
\end{itemize}
\vskip5pt
As noted in \cite{CDM}, this shows that $T^2_A$ is not a global product. For,
as an orientable $3$-manifold with first Betti number $1$, there is no
other choice but $S^1 \times S^2$ and this is ruled out since the fibre
bundle $T^2 \to T^2_A \to S^1$ shows that $T^2_A$ is aspherical.\\

The CDM example also fits in the scope of \thmref{thm:fundgp}. To see this,
we compute the fundamental group of $T^2_A$ explicitly. The fibre bundle
$T^2 \to T^2_A \to S^1$ shows that we have a short exact sequence of groups
$$
0\to \ZZ^2 \to \Gamma \to \ZZ\to 0,
$$
where $\Gamma=\pi_1(T^2_A)$. Since $\ZZ$ is free, $\Gamma$ is a semidirect
product $\ZZ^2\rtimes_{\phi}\ZZ$. The action of $\ZZ$ on $\ZZ^2$ is given by
the group homomorphism $\phi\colon \ZZ\to\mathrm{SL}(2,\ZZ)$ sending $1\in\ZZ$
to $\phi(1)=A\in\mathrm{SL}(2,\ZZ)$. As we remarked above, $T^2_A$ is covered
$4:1$ by a torus $T^3$ and this covering gives a map $\psi\colon \ZZ^3\to\Gamma$.
The map $\psi$ sends $(m,n,p)\in\ZZ^3$ to $(m,n,4p)\in\Gamma$, hence the quotient
$\Gamma/\ZZ^3$ is isomorphic to $\ZZ_4$.\\

For any $n\geq 1$ we give an example of a compact co-K\"{a}hler manifold of dimension 
$(2n+1)$ which is not homeomorphic to the global product of a compact
manifold of dimension $2n$ and a circle. This example was constructed by Marrero and 
Padr\'on (see \cite{MP}, example B1). We describe it according to our mapping torus and splitting
approach.

Let $\zeta=e^{2\pi i/6}$ and consider the lattice $\Lambda\subset\CC$ spanned by $1$ 
and $\zeta$. Set $T^2=\CC/\Lambda$ and $T^{2n}=\underbrace{T^2\times\ldots\times T^2}_{n \ \textrm{times}}$.
Then $T^{2n}$ is a compact K\"ahler manifold, with K\"ahler structure inherited by $\CC^n$.
Let $B\colon T^{2n}\to T^{2n}$ be the map covered by the linear transformation 
$\tilde{B}\colon \CC^n\to\CC^n$, $\tilde{B}=\textrm{diag}(\zeta,\ldots,\zeta)$. 
Then $B$ is a Hermitian isometry of the torus $T^{2n}$. Let $T^{2n}_B$ be the mapping 
torus of the Hermitian isometry $B$. Then $T^{2n}_B$ is a co-K\"ahler manifold and the 
associated fibre bundle $T^{2n}\to T^{2n}_B\to S^1$ is given by the projection 
$[p,t]\mapsto t$, where $p\in T^{2n}$. The Hermitian isometry $B$
has order 6, so we obtain a circle action on $T^{2n}_B$ given by
$$
S^1\times T^{2n}_B\to T^{2n}_B, \quad ([s],[p,t])\mapsto ([p,t+6s]).
$$
Composing the orbit map with the projection $T^{2n}_B\to S^1$, we obtain $S^1\to S^1$, 
$s\mapsto 6s$, which induces multiplication by 6 in cohomology. The $S^1-$action is 
homologically injective, and \thmref{thm:cosympsplit} gives us a finite cover of 
$T^{2n}_B$ of the form $T^{2n}\times S^1$. Hence $T^{2n}_B$ is finitely covered by a torus.

The fundamental group of $T^{2n}_B$ is the semidirect product 
$\Gamma=\Lambda^{n}\rtimes_{\phi}\ZZ$, where the action of $\ZZ$ on $\Lambda^n$ is given 
by the group homomorphism $\phi:\ZZ\to\mathrm{SL}(\Lambda^n)$, $\phi(1)=B$. The 
commutator subgroup $[\Gamma,\Gamma]$ is $\Lambda^n$, so in particular, $\Gamma$ is a 
solvable group. The first homology of $T^{2n}_B$ is
$$
H_1(T^{2n}_B;\ZZ)\cong\frac{\Gamma}{[\Gamma,\Gamma]}\cong\ZZ,
$$
so $b_1(T^{2n}_B)=1$. Now assume that $T^{2n}_B$ is the product of a compact manifold 
$K$ and a circle, $T^{2n}_B\cong K\times S^1$. The fundamental group of $K$ is solvable, being
a subgroup of $\Gamma$. Applying the K\"unneth formula with integer coefficients to 
$T^{2n}_B=K\times S^1$, we see that $H_1(K;\ZZ)=0$. Hence, $\pi_1(K)\cong[\pi_1(K),\pi_1(K)]$, 
but this is not possible because $\pi_1(K)$ is solvable. We conclude that $T^{2n}_B$ is not 
homeomorphic to the product of a compact, $2n-$dimensional manifold and a circle.

\vskip.2in

\noindent {\bf Acknowledgments.}
We thank Greg Lupton for useful conversations and the referee for several 
helpful suggestions. The first author thanks the Department of Mathematics 
at Cleveland State University for its hospitality during his extended visit (funded by 
CSIC and ICMAT) to Cleveland. In addition, the first author was partially supported by 
Project MICINN (Spain) MTM2010-17389. The second author was partially supported by a grant 
from the Simons Foundation: (\#244393 to John Oprea).




\end{document}